\numberwithin{equation}{section}
\newtheorem{teo}{Theorem}[section]
\newtheorem{prop}[teo]{Proposition}
\newtheorem{lem}[teo]{Lemma}
\newtheorem{cor}[teo]{Corollary}
\theoremstyle{definition}
\newtheorem{rem}[teo]{Remark}
\numberwithin{equation}{section}
\def\a{\alpha}
\def\b{\beta}
\def\o{\omega}
\def\R{\mathbb{R}}
\def\N{\mathbb{N}}
\def\Z{\mathbb{Z}}
\def\d{\delta}
\def\e{\varepsilon}
\def\t{\theta}
\def\f{\varphi}
\def\s{\sigma}
\def\mes{\operatorname{mes}}
\def\D{\Delta}
\begin{document}

\title[EMBEDDING THEOREMS]{Embedding theorems for   Sobolev and Hardy-Sobolev spaces and estimates of Fourier transforms}

\author[V.I. Kolyada]{V.I. Kolyada}
\address{Department of Mathematics\\
Karlstad University\\
Universitetsgatan 1 \\
651 88 Karlstad\\
SWEDEN} \email{viktor.kolyada@gmail.com}

\subjclass[2010]{Primary 46E35, 26D10; Secondary 46E30 }

\keywords{Embeddings; moduli of continuity;
Sobolev spaces;  Hardy spaces;  mixed norms; Fourier transforms}

\begin{abstract}

We prove embeddings of Sobolev and Hardy-Sobolev spaces into Besov spaces built upon certain mixed norms.
This gives an improvement of the known embeddings into usual Besov spaces. Applying these results,
we obtain Oberlin type estimates of Fourier transforms for functions in Sobolev spaces $W_1^1(\R^n).$

\end{abstract}

\maketitle

\section{Introduction}

This paper is devoted to the study of some inequalities for functions in the Sobolev spaces $W_p^1(\R^n)$ and Hardy-Sobolev spaces $HW_1^1(\R^n)$.

The Sobolev space  $W_p^1(\R^n)$ $(1\le p<\infty)$ is defined as the class  of all functions $f\in L^p(\R^n)$ for which
every first-order  weak derivative exists and belongs to $L^p(\R^n).$
The  classical Sobolev theorem (see \cite[Ch. V]{St})  states the
following.
\begin{teo}\label{Sob1} Let $n\ge 2,$
$1\le p< n,$ and $p^*=np/(n-p).$ Then for any $f\in
W_p^1(\mathbb{R}^n)$
\begin{equation}\label{sob1}
||f||_{p^*}\le c \|\nabla f\|_p.
\end{equation}
\end{teo}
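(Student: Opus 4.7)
The plan is to reduce the theorem to the borderline case $p=1$ via Gagliardo's inequality, and then bootstrap to $1<p<n$ through a substitution $f \leadsto |f|^\gamma$ combined with Hölder's inequality. By the standard density of $C_c^\infty(\R^n)$ in $W_p^1(\R^n)$, it suffices to prove (\ref{sob1}) for smooth, compactly supported $f$.

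First I would establish the key endpoint inequality
\begin{equation*}
\|f\|_{n/(n-1)}\le \prod_{i=1}^n \Big\|\frac{\partial f}{\partial x_i}\Big\|_1^{1/n}, \qquad f\in C_c^\infty(\R^n).
\end{equation*}
The starting point is the fundamental theorem of calculus applied in each coordinate direction: for every $i$,
\begin{equation*}
|f(x)|\le \int_{\R}\Big|\frac{\partial f}{\partial x_i}(x_1,\dots,t,\dots,x_n)\Big|\,dt.
\end{equation*}
Multiplying these $n$ one-variable bounds and raising to the power $1/(n-1)$ gives a pointwise estimate for $|f(x)|^{n/(n-1)}$ as a product of $n$ factors, each depending on all variables except one. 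Integrating successively in $x_1,x_2,\dots,x_n$ and applying the generalized Hölder inequality at each stage (the Loomis--Whitney trick) yields the displayed inequality. Combined with the AM--GM bound on the product, this proves (\ref{sob1}) when $p=1$ with $p^*=n/(n-1)$.

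For the general case $1<p<n$, I would apply the $p=1$ result to the function $g=|f|^{\gamma}$ with the critical exponent $\gamma=(n-1)p/(n-p)$, chosen precisely so that $\gamma\cdot \tfrac{n}{n-1}=p^*$ and $(\gamma-1)\tfrac{p}{p-1}=p^*$. A routine truncation and mollification argument shows $g\in W_1^1(\R^n)$ with $\nabla g = \gamma |f|^{\gamma-1}\nabla f$ a.e. The endpoint inequality then gives
\begin{equation*}
\|f\|_{p^*}^{\gamma} = \|g\|_{n/(n-1)} \le c\,\gamma\,\big\||f|^{\gamma-1}\nabla f\big\|_1,
\end{equation*}
and Hölder's inequality with exponents $p'$ and $p$ bounds the right side by $c\gamma\,\|f\|_{p^*}^{\gamma-1}\|\nabla f\|_p$. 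Dividing by $\|f\|_{p^*}^{\gamma-1}$ (after first checking it is finite, again by a truncation argument) yields (\ref{sob1}).

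The only delicate point is the manipulation of $|f|^\gamma$ when $\gamma$ is not an integer: one must verify chain-rule-type formulas for the weak gradient and justify the division at the end by an a priori finiteness argument, typically by truncating $f$ at height $N$ and letting $N\to\infty$. Everything else—the Loomis--Whitney integration, Hölder, and density of smooth functions—is routine.
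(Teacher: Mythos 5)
The paper does not prove Theorem \ref{Sob1}; it is quoted as the classical Sobolev inequality with a citation to Stein's book, so there is no in-paper argument to compare against. Your proposed proof is the standard Gagliardo--Nirenberg--Sobolev argument (endpoint $p=1$ via the fundamental theorem of calculus, iterated generalized H\"older in the Loomis--Whitney pattern, then bootstrap to $1<p<n$ by applying the endpoint to $g=|f|^\gamma$ with $\gamma=(n-1)p/(n-p)$ and finishing with H\"older), and it is correct. Your arithmetic checks out: $\gamma\cdot\frac{n}{n-1}=p^*$, $(\gamma-1)p'=p^*$, and $\gamma-1=n(p-1)/(n-p)>0$, so $\|g\|_{n/(n-1)}=\|f\|_{p^*}^\gamma$ and the H\"older factor is $\|f\|_{p^*}^{\gamma-1}$ as claimed. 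One small simplification: once you have reduced to $f\in C_c^\infty(\R^n)$ by density, the auxiliary function $g=|f|^\gamma$ is automatically $C^1$ with compact support (because $\gamma>1$ makes $t\mapsto|t|^\gamma$ continuously differentiable), so the truncation/mollification step you flag as delicate is unnecessary in this reduction; likewise, $\|f\|_{p^*}$ is trivially finite for compactly supported smooth $f$, so the division is licit without a further a priori argument. The only place where care is genuinely needed is the initial density reduction, which is standard for $W_p^1(\R^n)$ with $1\le p<\infty$.
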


The Lebesgue norm at the left-hand side of (\ref{sob1}) can be replaced by the stronger Lorentz norm.
Namely,  for any $f\in W_p^1(\R^n),\,\, n\ge 2,$ $1\le p< n,$
\begin{equation}\label{embed0}
||f||_{p^*,p}\le c ||\nabla f||_p
\end{equation}
(see  \cite{Al}, \cite{Neil}, \cite{Pe}, \cite{Po}).

Let a function $f$ be defined on $\R^n$ and let
$k\in\{1,...,n\}.$ Set
\begin{equation}\label{Delta}
\Delta_k(h)f(x)= f(x+he_k)-f(x), \quad x\in\R^n,\, h\in \R
\end{equation}
($e_k$ is the $k$th unit coordinate vector).

The following theorem holds.
\begin{teo}\label{1988} Let $n\in \N.$ Assume that $1<p<\infty$ and $n\ge 1$, or $p=1$ and $n\ge 2.$
If $p<q<\infty$ and $s=1-n(1/p-1/q)>0,$ then
 for any $f\in W_p^1(\R^n)$
\begin{equation}\label{embed1}
\sum_{k=1}^n \left(\int_0^\infty h^{-sp}||\D_k(h)f||^p_{q,p}\frac{dh}{h}\right)^{1/p}\le c \sum_{k=1}^n||D_k f||_p.
\end{equation}
\end{teo}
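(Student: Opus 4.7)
By the triangle inequality in the outer $\ell^p$-sum on the left of (\ref{embed1}), it suffices to treat each direction $k\in\{1,\dots,n\}$ separately and prove
$$\left(\int_0^\infty h^{-sp}\|\Delta_k(h)f\|_{q,p}^p\,\frac{dh}{h}\right)^{1/p}\le c\sum_{j=1}^n\|D_jf\|_p.$$
The difference $\Delta_k(h)f$ lies in $W_p^1(\R^n)$ and satisfies two elementary bounds: from Minkowski applied to $\Delta_k(h)f(x)=\int_0^h D_kf(x+te_k)\,dt$,
$$\|\Delta_k(h)f\|_p\le h\,\|D_kf\|_p,$$
and from the Sobolev-Lorentz inequality (\ref{embed0}) applied to $\Delta_k(h)f$, together with $\|\nabla\Delta_k(h)f\|_p\le 2\|\nabla f\|_p$,
$$\|\Delta_k(h)f\|_{p^*,p}\le c\,\|\nabla f\|_p.$$

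A naive real interpolation of these two endpoint estimates yields only $\|\Delta_k(h)f\|_{q,p}\le c\,h^s\|D_kf\|_p^s\|\nabla f\|_p^{1-s}$, which when integrated against $h^{-sp-1}dh$ produces a logarithmic divergence at $h=0$. The remedy I would follow is to pass to non-increasing rearrangements, exploiting the identity $\|g\|_{q,p}^p=p\int_0^\infty t^{p/q-1}g^*(t)^p\,dt$, and to establish pointwise bounds on $(\Delta_k(h)f)^*(t)$ that combine the smoothness (encoded by $\nabla f$) and the difference structure (encoded by $h$). For $1<p<\infty$ one starts from the pointwise inequality $|\Delta_k(h)f(x)|\le h\,M_k(D_kf)(x)$, where $M_k$ is the one-dimensional Hardy-Littlewood maximal operator in direction $e_k$; rearranging gives $(\Delta_k(h)f)^*(t)\le c\,h\,(D_kf)^{**}(t)$, which is then combined with the sharp rearrangement form of (\ref{embed0}) applied to $\Delta_k(h)f$.

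Inserting these pointwise bounds into the integral representation of the Lorentz norm and splitting the $t$-integration at the scale where the two dominate, a double integral in $(h,t)$ results; Fubini's theorem interchanges the order, and the scaling relation $s=1-n(1/p-1/q)$ is precisely what makes the weights balance so as to reduce the double integral to a single rearrangement integral in $t$ of either $(D_kf)^{**}(t)^p$ or $(|\nabla f|^p)^{**}(t)$. Each of these is controlled by the corresponding $L^p$-norm via the classical Hardy inequality for non-increasing rearrangements, which closes the estimate.

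The principal obstacle is twofold. First, producing the pointwise rearrangement bounds in a form strong enough to support the integration: the two raw endpoint estimates do not suffice, and one must extract the additional decay from the fact that for small $h$ the rearrangement $(\Delta_k(h)f)^*(t)$ is small beyond a threshold $t\sim h^n$. Second, the endpoint case $p=1$ is delicate, since the directional maximal operator is not of strong type $(1,1)$ and the maximal-function argument must be replaced by an averaging at scale $h$ combined with the $L^{n/(n-1),1}$-form of (\ref{embed0}); this is precisely why the hypothesis $n\ge 2$ appears in that range.
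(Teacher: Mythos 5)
The paper does not prove Theorem \ref{1988} directly: it is cited from Herz for $p>1$ and from \cite{K88} for $p=1$, $n\ge 2$, and the paper's contribution is the stronger mixed-norm Theorem \ref{refined}, from which (\ref{embed1}) is recovered via Proposition \ref{Strong} --- but that route is available only where Theorem \ref{refined} itself holds, so in particular not for $p=1$, $n=2$. Your sketch is therefore an independent attempt. For $p>1$ the ingredients you list are the right ones and the scaling does balance: the directional maximal operator gives $(\D_k(h)f)^*(t)\le ch\,(D_kf)^{**}(t)$; a Kolyada-type rearrangement inequality applied to $\D_k(h)f$ gives $(\D_k(h)f)^*(t)-(\D_k(h)f)^*(2t)\le ct^{1/n}\,|\nabla f|^{**}(t)$ uniformly in $h$; splitting the $t$-integral at $t\sim h^n$ and applying Fubini reduces the two pieces to $\int_0^\infty\bigl((D_kf)^{**}\bigr)^p\,dt$ and $\int_0^\infty\bigl(|\nabla f|^{**}\bigr)^p\,dt$, each finite by Hardy's inequality when $p>1$. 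You do, however, omit a step that cannot be waved away: the second estimate controls only a \emph{difference} of rearrangements, and promoting it to a bound on the full Lorentz quasinorm needs an absorption argument (bound a truncated $J(\e)$ by $\theta J(\e)+C\|\nabla f\|_p^p$ with $\theta<1$ and let $\e\to 0$), exactly the device the paper uses in the proof of Theorem \ref{refined}.

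The substantive gap is at $p=1$. Both of your rearrangement bounds terminate in $\int_0^\infty g^{**}(t)\,dt$ with $g\in L^1(\R^n)$, which is $+\infty$ unless $g\equiv 0$; Hardy's inequality for $(\cdot)^{**}$ fails precisely at $p=1$. The remedy you gesture at --- ``averaging at scale $h$ combined with the $L^{n/(n-1),1}$-form of (\ref{embed0})'' --- does not close this. The region $t<h^n$ can indeed be handled for $p=1$, $n\ge2$ via the crude bound $(\D_k(h)f)^*(t)\le 2f^*(t/2)$: after Fubini the exponents collapse to $\|f\|_{n',1}\le c\|\nabla f\|_1$. But on $t>h^n$ the only pointwise information is $|\D_k(h)f(x)|\le\int_0^h|D_kf(x+ue_k)|\,du$, an \emph{average} in $x_k$; rearrangement of an average is controlled only in the double-star sense, $(\D_k(h)f)^{**}(t)\le h\,(D_kf)^{**}(t)$, and that again produces the divergent $\int_0^\infty(D_kf)^{**}\,dt$. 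This is exactly the difficulty the paper's mixed-norm device is built to bypass: integrating in $x_k$ first yields the genuinely pointwise inequality $\f_h(\widehat x_k)\le h\,\psi_k(\widehat x_k)$ between functions on $\R^{n-1}$, hence the single-star bound $\f_h^*(t)\le h\,\psi_k^*(t)$ with $\int_0^\infty\psi_k^*\,dt=\|D_kf\|_1<\infty$. Without a substitute for that single-star control, your argument does not close for $p=1$.
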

For $p>1$ inequality (\ref{embed1}) (with the weaker norm $||\D_k(h)f||_q$ at the left-hand side) was obtained
by Herz \cite{Herz}. For $p=1, n\ge 2$ Theorem \ref{1988} was proved in \cite{K88} (see also \cite{K93}). The case $p=1$ is of special interest; we stress that
 Theorem \ref{1988} fails for $p=n=1.$ However, this theorem holds for any function $f$ from the Hardy space $H^1(\R)$
such that $f'\in H^1(\R)$, if we replace the $L^1-$ norm of $f'$ by its $H^1-$ norm (see \cite{Oswald}, \cite{K88}).

One of the main results of this paper is the refinement of the inequality (\ref{embed1})   given in terms of
 mixed norms.

Let  $x=(x_1,...,x_n).$  Denote by $\widehat{x}_k$
the $(n-1)-$dimensional vector obtained from the $n$-tuple $x$ by
removal of its $k$th coordinate.
 We shall write
 $x=(x_k,\widehat x_k).$

 If $X(\R)$ and $Y(\R^{n-1})$ are Banach function spaces, and $k\in\{1,...,n\}$, we denote by $Y[X]_k$ the mixed norm space obtained by taking first the norm in $X$ with respect to  $x_k$, and then the norm in $Y$ with respect to $\widehat {x}_k\in \R^{n-1}.$

 We prove the following theorem.
\begin{teo}\label{refined1}
Let $1<p<\infty$ and $n\ge 2$,  or $p=1$ and $n\ge 3.$ If $p<q<\infty$ and $\a=1-(n-1)(1/p-1/q)>0$, then for any $f\in W_p^1(\R^n)$
\begin{equation}\label{embed32}
\sum_{k=1}^n \left(\int_0^\infty h^{-\a p}||\D_k(h)f||^p_{L^{q,p}[L^p]_k}\frac{dh}{h}\right)^{1/p}\le c \sum_{k=1}^n||D_k f||_p.
\end{equation}
\end{teo}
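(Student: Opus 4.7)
The plan is to fix an index $k\in\{1,\dots,n\}$ and reduce (\ref{embed32}) to an $(n-1)$-dimensional rearrangement estimate. Set $F_h(\widehat{x}_k):=\|\Delta_k(h)f(\cdot,\widehat{x}_k)\|_{L^p(\R)}$, so that $\|F_h\|_{L^{q,p}(\R^{n-1})}=\|\Delta_k(h)f\|_{L^{q,p}[L^p]_k}$. Two pointwise bounds for $F_h$ drive the proof. First, writing $\Delta_k(h)f=\int_0^h D_kf(\cdot+te_k)\,dt$ and applying Minkowski's integral inequality in $x_k$ gives the ``small-$h$'' bound $F_h\le hG$ with $G(\widehat{x}_k):=\|D_kf(\cdot,\widehat{x}_k)\|_{L^p(\R)}$ and $\|G\|_{L^p(\R^{n-1})}=\|D_kf\|_p$. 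Second, a difference-quotient computation combined with Minkowski yields $|\nabla_{\widehat{x}_k}F_h(\widehat{x}_k)|\le\|\nabla_{\widehat{x}_k}\Delta_k(h)f(\cdot,\widehat{x}_k)\|_{L^p(\R)}\le 2\tilde H(\widehat{x}_k)$, where $\tilde H(\widehat{x}_k):=\|\nabla_{\widehat{x}_k}f(\cdot,\widehat{x}_k)\|_{L^p(\R)}$ satisfies $\|\tilde H\|_{L^p(\R^{n-1})}\le\|\nabla f\|_p$.

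The next step is to apply the P\'olya--Szeg\H{o}-type rearrangement inequality in $\R^{n-1}$ to $F_h$: this produces $F_h^*(t)\le c\int_t^\infty s^{1/(n-1)-1}\tilde H^*(s)\,ds=:U(t)$. Combined with $F_h^*(t)\le hG^*(t)$ from the first bound, one arrives at the envelope $F_h^*(t)\le\min\{hG^*(t),\,U(t)\}$. Now write $\|F_h\|_{L^{q,p}(\R^{n-1})}^p\sim\int_0^\infty t^{p/q-1}(F_h^*(t))^p\,dt$, swap the orders of integration in $t$ and $h$, and split the $h$-integral at the balance point $h_1(t):=U(t)/G^*(t)$. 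A short two-piece computation yields $\int_0^\infty h^{-\alpha p-1}(F_h^*(t))^p\,dh\le C\,(G^*(t))^{\alpha p}\,U(t)^{p(1-\alpha)}$.

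To close the argument, note the algebraic identity $p/q-1=(1-\alpha)(p/q'-1)$, where $q':=(n-1)p/(n-1-p)$ is the $(n-1)$-dimensional Sobolev exponent. H\"older's inequality with conjugate exponents $1/\alpha$ and $1/(1-\alpha)$ then separates the factors in the $t$-integral: $\int_0^\infty t^{p/q-1}(G^*(t))^{\alpha p}U(t)^{p(1-\alpha)}\,dt\le\|G\|_{L^p(\R^{n-1})}^{\alpha p}\,\|U\|_{L^{q',p}(\R^{n-1})}^{p(1-\alpha)}$. The second factor is dominated by $c\|\tilde H\|_{L^p(\R^{n-1})}\le c\|\nabla f\|_p$ via the classical weighted Hardy inequality, which is precisely the Lorentz--Sobolev embedding in $\R^{n-1}$. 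Assembling the pieces and summing over $k$ produces (\ref{embed32}).

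The chief technical obstacle will be the rigorous justification of the P\'olya--Szeg\H{o} step for the nonlinear functional $F_h$ of $f$; this can be handled by first reducing to $f$ smooth with compact support, where the gradient bound on $F_h$ is transparent, and then passing to the limit. Additional care is required at the endpoints: $p=1$ forces $n\ge 3$ so that the $(n-1)$-dimensional weak-type Lorentz--Sobolev inequality is available, while for $p\ge n-1$ the auxiliary exponent $q'$ degenerates, and one must interpolate against an alternative endpoint space ($L^\infty$ or a Morrey-type space), choosing any auxiliary $q''>q$ with $(n-1)(1/p-1/q'')<1$ so that the same scheme applies.
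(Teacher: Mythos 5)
Your approach is genuinely different from the paper's. The paper splits the $t$-integral at $t=h^{n-1}$, controls the large-$t$ piece by the elementary bound $\varphi_h^*(t)\le h\psi_n^*(t)$, and controls the small-$t$ piece either (for $p=1$, $n\ge 3$) by the uniform bound $\varphi_h\le 2g$ combined with the Sobolev--Lorentz inequality in $\R^{n-1}$, or (for $p>1$) by the oscillation estimate $\varphi_h^*(t)-\varphi_h^*(2t)\le ct^{1/(n-1)}\sum_j\psi_j^{**}(t)$ followed by a careful absorption argument. You instead build a pointwise envelope $F_h^*(t)\le\min\{hG^*(t),U(t)\}$ via a rearrangement estimate for $F_h$, split the $h$-integral at the balance point, and close with H\"older. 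This is an attractive idea, but as written it has two concrete gaps which together exclude a large part of the theorem's range.

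\textbf{Gap 1: the rearrangement estimate and the case $p=1$.} You state $F_h^*(t)\le c\int_t^\infty s^{1/(n-1)-1}\tilde H^*(s)\,ds$ with the \emph{single-star} rearrangement $\tilde H^*$. This is false: take $m=n-1\ge 2$ and a smooth bump $F$ of height $1$ whose gradient is supported in a thin annulus of measure $\varepsilon$. Then $|\nabla F|^*$ is supported on $(0,\varepsilon)$, so the right-hand side vanishes for $t>\varepsilon$, while $F^*(t)\approx 1$ there. The correct estimate uses the double-star $|\nabla F|^{**}$, namely $F_h^*(t)\le c\int_t^\infty s^{1/m-1}|\nabla F_h|^{**}(s)\,ds$. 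But with $\tilde H^{**}$ in $U$, your final Sobolev--Lorentz step reads
\[
\|U\|_{L^{q',1}}=c\int_0^\infty t^{1/q'-1}\int_t^\infty s^{1/(n-1)-1}\tilde H^{**}(s)\,ds\,dt
= c'\int_0^\infty \tilde H^{**}(s)\,ds=\infty
\]
for every nontrivial $\tilde H\in L^1$, since $1/q'+1/(n-1)=1$. So the scheme yields nothing at $p=1$; that case must be handled by a route that bypasses the gradient-of-$F_h$ estimate entirely, as the paper does.

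\textbf{Gap 2: the restriction $p<n-1$.} Once you force the $G$-factor in H\"older to be $\|G\|_p^{\alpha p}$, the exponents $1/\alpha,1/(1-\alpha)$ are determined, and the $t$-weight on the $U$-factor is pinned to $(p/q-1)/(1-\alpha)=p/q'-1$. Applying the dual Hardy inequality to bound this second factor requires $1/q'>0$, which is equivalent to $p<n-1$. In particular $n=2$ is excluded for every $p$, and $n=3$ with $p\ge 2$ is excluded as well — exactly a range the theorem is supposed to cover (and the $n=2$, $p>1$ case is highlighted in the paper). Your suggested patch (``interpolate against $L^\infty$ or a Morrey-type space, choosing any $q''>q$'') does not identify a concrete estimate that replaces the dual Hardy step, and I do not see how to make it work: the balance-point computation produces the fixed combination $G^{*\alpha p}U^{(1-\alpha)p}$, and no other H\"older split produces a pure $\|G\|_p^{\alpha p}$ factor. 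So this remains a real gap rather than a technicality to be smoothed over.

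In summary, the scheme is sound only for $1<p<n-1$ (hence $n\ge 3$). The missing cases ($p=1$ and $p\ge n-1$, in particular all of $n=2$) require the other ideas that appear in the paper's proof: the uniform bound $\varphi_h\le 2g$ for $p=1$, and the oscillation bound $\varphi_h^*(t)-\varphi_h^*(2t)\le ct^{1/(n-1)}\sum_j\psi_j^{**}(t)$ with the $J_2(\varepsilon)$-absorption trick for $p>1$, which avoids both the use of $q'$ and the rearrangement ODE for $F_h$.
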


We show that the left-hand side of (\ref{embed1}) is majorized by the left-hand side of (\ref{embed32}). Thus, for the indicated values of $n$ and $p$, Theorem \ref{refined1} provides a refinement of Theorem \ref{1988}. We stress that inequality (\ref{embed32}) holds for $n=2,$  $p>1$. However, {\it the question of the validity
 of  this inequality for $n=2, p=1$ remains open.}

As we have observed above, Theorem \ref{1988} fails for $p=n=1,$ but  in this case there holds a weaker inequality  with $L^1-$norm of $f'$ replaced by  its $H^1-$norm. Similarly, we supplement Theorem \ref{refined1} by the following result.

As usual, for any $1\le p\le \infty$ we denote $p'=p/(p-1).$
\begin{teo}\label{hardy_refined}
Let $f\in W_1^1(\R^n)$ $(n\ge 2)$ and assume that all  partial derivatives $D_jf$ $(j=1,...,n)$ belong to the Hardy space $H^1(\R^n)$. Then for any $1<q<(n-1)/(n-2)$
\begin{equation}\label{embed321}
\sum_{k=1}^n \int_0^\infty h^{(n-1)/q'-1}||\D_k(h)f||_{L^{q,1}[L^1]_k}\frac{dh}{h}\le c \sum_{k=1}^n||D_kf||_{H^1}.
\end{equation}
\end{teo}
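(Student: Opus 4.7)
The strategy mirrors that of Theorem~\ref{refined1} at the $p=1$ endpoint, with $H^1$-based estimates substituted wherever the ordinary $L^1$-Sobolev inequality fails. By linearity and by symmetry among the coordinates, it suffices to bound, for a fixed direction $k=n$,
$$J:=\int_0^\infty h^{-\alpha-1}\,\|\Delta_n(h)f\|_{L^{q,1}_{x'}[L^1_{x_n}]}\,dh,\qquad \alpha:=1-\tfrac{n-1}{q'}\in(0,1),$$
by $c\sum_j\|D_jf\|_{H^1}$, where $x=(x',x_n)$ with $x'\in\R^{n-1}$.

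The principal new tool is a critical mixed-norm Hardy--Sobolev embedding
$$\|g\|_{L^{q_0,1}_{x'}[L^1_{x_n}]}\le c\sum_{j=1}^n\|D_jg\|_{H^1(\R^n)},\qquad q_0:=\tfrac{n-1}{n-2},$$
valid for $g$ with $D_jg\in H^1(\R^n)$; this refines the classical Fefferman--Stein--Semmes embedding $\|g\|_{L^{n/(n-1),1}(\R^n)}\le c\sum_j\|D_jg\|_{H^1}$. I would prove it by atomic decomposition: the $e_n$-derivative of an integrable function has the slice-mean-zero property $\int_\R D_ng(x',t)\,dt=0$ for a.e.\ $x'$, so for each $H^1$-atom $a$ appearing in an atomic decomposition of $D_ng$ the $x_n$-antiderivative $A(x',x_n)=\int_{-\infty}^{x_n}a(x',t)\,dt$ is compactly supported in both variables. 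Estimating the $L^{q_0,1}(\R^{n-1})$-norm of $x'\mapsto\|A(\cdot,x')\|_{L^1_{x_n}}$ via the $(n-1)$-dimensional Hardy--Sobolev embedding $WH^1(\R^{n-1})\hookrightarrow L^{q_0,1}(\R^{n-1})$ and summing over atoms gives the claim.

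Given this embedding, apply it to $g=\Delta_n(h)f$. Translation-invariance of $H^1$ (through Riesz transforms) yields $\|\Delta_n(h)D_jf\|_{H^1}\le 2\|D_jf\|_{H^1}$, hence the uniform bound $\|\Delta_n(h)f\|_{L^{q_0,1}[L^1]_n}\le 2c\,X$ with $X:=\sum_j\|D_jf\|_{H^1}$. To extract decay in $h$ at the subcritical index $q<q_0$ (needed for convergence of $J$), decompose $f=\sum_\ell f_\ell$ via Littlewood--Paley projections localised at frequency $2^\ell$. Bernstein inequalities, the critical embedding applied to $f_\ell$ and to $D_nf_\ell$, and the trivial bound $\|\Delta_n(h)f_\ell\|_{L^1}\le h\|D_nf_\ell\|_{L^1}$, combined via real interpolation in the Lorentz/mixed-norm scale, give for each piece
$$\|\Delta_n(h)f_\ell\|_{L^{q,1}[L^1]_n}\le c\min\!\bigl(h\,2^{\ell(1-\alpha)},\,2^{-\ell\alpha}\bigr)\sum_j\|D_jf_\ell\|_{H^1}.$$
Integrating against $h^{-\alpha-1}$ produces $\int_0^\infty h^{-\alpha-1}\|\Delta_n(h)f_\ell\|_{L^{q,1}[L^1]_n}\,dh\le c\,\alpha^{-1}(1-\alpha)^{-1}\sum_j\|D_jf_\ell\|_{H^1}$, and the final summation in $\ell$ is carried out by a vector-valued (square-function) version of the critical embedding, so as to pair off-diagonal contributions and recover $\|D_jf\|_{H^1}$ in place of the larger Besov-type sum $\sum_\ell\|D_jf_\ell\|_{H^1}$.

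The main obstacle is the critical mixed-norm Hardy--Sobolev embedding itself. Since $H^1(\R^n)$ has no intrinsic restriction to hyperplanes and contains no non-trivial non-negative functions, a slice-wise reduction to the $(n-1)$-dimensional Hardy--Sobolev inequality is unavailable; the resolution rests on the structural fact that our object is a \emph{derivative} rather than an arbitrary Hardy function, so that its slice-mean-zero property enables the construction of compactly supported antiderivatives. Packaging the Littlewood--Paley summation through the square-function characterisation of $H^1$, rather than by naive triangle inequality, is a secondary technical point that averts a spurious logarithmic blow-up.
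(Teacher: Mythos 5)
The critical new content of Theorem~\ref{hardy_refined} is the case $n=2$: for $n\ge 3$ the statement follows at once from the stronger $L^1$-result (\ref{embed32}), and the paper says so explicitly. Your strategy collapses precisely there. With $q_0=(n-1)/(n-2)$, the "critical mixed-norm Hardy--Sobolev embedding" you want to use reads, for $n=2$, as an estimate in $L^{\infty,1}(\R)[L^1(\R)]$; but $L^{\infty,1}$ is degenerate (for a nonzero decreasing rearrangement, $\int_0^\infty f^*(t)\,dt/t=\infty$), so there is no such endpoint space to embed into and the interpolation/Littlewood--Paley machinery has nothing to anchor on. Your decay estimate $\min(h\,2^{\ell(1-\alpha)},2^{-\ell\alpha})$ also has no meaning when $q_0=\infty$. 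Thus the plan only treats the range $n\ge 3$, where nothing new is being proved.

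There is also a flaw in the proposed proof of the critical embedding itself. It is true that $D_n g$ has the slice-mean-zero property $\int_\R D_n g(x',t)\,dt=0$ for a.e.\ $x'$, but an atomic decomposition $D_n g=\sum_k \lambda_k a_k$ produces standard $H^1$-atoms, which satisfy only $\int_{\R^n}a_k=0$; there is no reason for an \emph{individual} atom to have zero slice means in $x_n$, so its $x_n$-antiderivative need not be compactly supported (or even integrable) in $x_n$, and the slice-wise reduction to $\R^{n-1}$ does not go through atom by atom. The final summation "pairing off-diagonal contributions" to avoid a Besov-type loss is asserted, not argued, and would need a concrete vector-valued square-function lemma. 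By contrast, the paper's $n=2$ argument is of a different nature and genuinely two-dimensional: it works with the rearrangement $\f_h^*(s)$ of the $y$-integrated difference, controls $\f_h^*(s)-\f_h^*(2s)$ by the non-tangential maximal functions $N(D_1f)$, $N(R_1D_1f)$, $N(R_2D_2f)$ via the Poisson extension and the conjugacy relations (Lemma~\ref{Supp}), and then closes the estimate by absorbing the term $J_2''\le 2^{-1/q}J$; no atomic decomposition, no Littlewood--Paley, and no auxiliary mixed-norm embedding is needed.
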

That is,
inequality  (\ref{embed32}) holds for $p=1, n=2$ if the $L^1-$norms of the derivatives are replaced by the Hardy $H^1-$norms.
Of course, for $n\ge 3$ (\ref{embed321}) follows from (\ref{embed32}).

We should note that this work was partly inspired by the Oberlin estimate \cite{Ober} of Fourier transforms of functions  in the Hardy space $H^1(\R^n).$
We apply inequality (\ref{embed32}) to obtain an analogue of this estimate for the derivatives of functions in $W_1^1(\R^n)$. In particular, we prove the following result.
\begin{teo}\label{Obertype100} Let $f\in W_1^1(\R^n)$ $(n\ge 3).$ Then
\begin{equation}\label{obertype100}
 \sum_{k\in\Z} 2^{k(2-n)}\sup_{2^{k}\le r\le 2^{k+1}}\int_{S_r} |\widehat f(\xi)| d\s(\xi)\le c ||\nabla f||_1,
\end{equation}
where $S_r$ is the sphere of the radius $r$ centered at the origin in $\R^n$ and $d\s(\xi)$ is the canonical surface measure on $S_r.$
\end{teo}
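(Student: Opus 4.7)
The plan is to combine Theorem~\ref{refined1} with $p=1$ (which is applicable for $n\ge 3$, at any fixed $q\in(1,(n-1)/(n-2))$ for which the Besov exponent $\a=1-(n-1)/q'$ lies in $(0,1)$) with a sphere-Fourier lemma that converts the mixed-norm $L^{q,1}[L^1]_j$ on the spatial side into an $L^1(S_r)$-type estimate on the frequency side. I first split each sphere into the $n$ polar caps $S_r=\bigcup_{j=1}^n A_j(r)$, where $A_j(r):=\{\xi\in S_r:\,|\xi_j|\ge r/\sqrt n\}$. For $h$ in the interval $I_r:=[c_1/r,c_2/r]$ (with suitable universal $0<c_1<c_2$), the multiplier $|e^{2\pi ih\xi_j}-1|=2|\sin\pi h\xi_j|$ is bounded above and below by positive constants on $A_j(r)$, giving the pointwise domination
\[
|\widehat f(\xi)|\le C\,|\widehat{\Delta_j(h)f}(\xi)|\qquad\text{for every }\xi\in A_j(r),\ h\in I_r,
\]
so that $\int_{A_j(r)}|\widehat f|\,d\s\le C\int_{S_r}|\widehat{\Delta_j(h)f}|\,d\s$.

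The crux of the argument is a sphere-Fourier lemma whose exponent $(n-1)/q$ is dictated by the scaling $g\mapsto g(\lambda\cdot)$: for every $g\in L^{q,1}[L^1]_j$ and every $r>0$,
\[
\int_{S_r}|\widehat g(\xi)|\,d\s(\xi)\le C\,r^{(n-1)/q}\,\|g\|_{L^{q,1}[L^1]_j}.
\]
To prove this I would take $j=n$ (by symmetry), note that the partial Fourier transform $\tilde g(\widehat x,\xi_n):=\int g(\widehat x,x_n)e^{-2\pi ix_n\xi_n}\,dx_n$ obeys the pointwise bound $|\tilde g(\widehat x,\xi_n)|\le G(\widehat x):=\|g(\widehat x,\cdot)\|_{L^1}$, and then apply the Lorentz-space Hausdorff--Young inequality in $\widehat x$ to get $\|\widehat g(\cdot,\xi_n)\|_{L^{q',1}(\R^{n-1})}\le C\|G\|_{L^{q,1}(\R^{n-1})}$ uniformly in $\xi_n$. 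Over the cap $A_n(r)$ the projection $\xi\mapsto\widehat\xi$ has Jacobian bounded by $\sqrt n$, reducing the sphere integral to an integral of $|\widehat g|$ against the characteristic function of a ball of radius $r$ in $\R^{n-1}$; a Hölder inequality in Lorentz spaces, using $\|\chi_{B_r}\|_{L^{q,\infty}}\sim r^{(n-1)/q}$, then yields the desired factor.

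Granted the lemma, I insert it into the cap estimate and exploit the freedom $h\in I_r$ by taking an infimum, which is controlled by the logarithmic average $\int_{I_r}\|\Delta_j(h)f\|_{L^{q,1}[L^1]_j}\,dh/h$. For $r\in[2^k,2^{k+1}]$ each $I_r$ is contained in a single dyadic interval $J_k\subset\{h\sim 2^{-k}\}$, so taking the supremum in $r$ is dominated by $\int_{J_k}\|\Delta_j(h)f\|_{L^{q,1}[L^1]_j}\,dh/h$. Multiplying by $2^{k(2-n)}$, the exponents combine as $(2-n)+(n-1)/q=\a$; since $2^{k\a}\sim h^{-\a}$ on $J_k$, summation in $k$ (using the bounded overlap of the $J_k$) and in $j$ gives
\[
\sum_{k\in\Z}2^{k(2-n)}\sup_{2^k\le r\le 2^{k+1}}\int_{S_r}|\widehat f|\,d\s\le C\sum_{j=1}^n\int_0^\infty h^{-\a}\|\Delta_j(h)f\|_{L^{q,1}[L^1]_j}\,\frac{dh}{h},
\]
and Theorem~\ref{refined1} bounds the right-hand side by $c\|\nabla f\|_1$.

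The principal obstacle is the sphere-Fourier lemma itself. The delicate point is that along $A_n(r)$ the variable $\xi_n$ is a function of $\widehat\xi$, so the uniform-in-$\xi_n$ Hausdorff--Young bound does not apply verbatim; some additional step---either passing to $\sup_{\xi_n}|\widehat g(\widehat\xi,\xi_n)|$ controlled through a Minkowski/maximal-function argument, or decomposing the cap into thin strips and summing---is needed, and this is what forces the precise weak-type range $1<q<(n-1)/(n-2)$. A secondary technical point is the replacement of $\sup_{r\sim 2^k}$ by an $L^1(dh/h)$-integral; because $h\mapsto\|\Delta_j(h)f\|_{L^{q,1}[L^1]_j}$ is not monotone, no direct comparison works, and the infimum-over-$I_r$ maneuver, which is only possible because the pointwise bound $|\widehat f|\le C|\widehat{\Delta_j(h)f}|$ holds for a whole range of $h$, is the device that makes the exponents match Theorem~\ref{refined1} cleanly.
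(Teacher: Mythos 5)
Your overall strategy coincides with the paper's: decompose $S_r$ into polar caps $A_j(r)$, use the multiplier $e^{2\pi i h\xi_j}-1$ to pass to $\widehat{\Delta_j(h)f}$, apply a Hausdorff--Young argument in the mixed norm $L^{q,1}(\R^{n-1})[L^1(\R)]_j$, match exponents so that $(2-n)+(n-1)/q=\alpha$, and close with Theorem~\ref{refined1}.  However, the proof as you present it is not complete, because the step you yourself call ``the principal obstacle'' --- the sphere-Fourier lemma
\[
\int_{S_r}|\widehat g(\xi)|\,d\sigma(\xi)\le C\,r^{(n-1)/q}\,\|g\|_{L^{q,1}[L^1]_j}
\]
--- is left unproved.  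You diagnose the difficulty correctly: on the cap $A_j(r)$ the variable $\xi_j$ is a function of $\widehat\xi_j$, so a uniform bound on $\|\widehat g(\cdot,\xi_j)\|_{L^{q',1}(\R^{n-1})}$ over $\xi_j$ does not control the ``diagonal section'' $\widehat\xi_j\mapsto\widehat g(\widehat\xi_j,\xi_j(\widehat\xi_j))$.  The natural repair, passing to $\sup_{\xi_j}|\widehat g(\widehat\xi_j,\xi_j)|$, is bounded pointwise by $\int_\R |(\mathcal F_{\widehat x_j}g)(x_j,\widehat\xi_j)|\,dx_j$, and an application of Hölder/Hausdorff--Young there produces the mixed norm $L^1(x_j)[L^q(\widehat x_j)]$ --- the \emph{opposite} iteration order from $L^q[L^1]_j$, which by Minkowski is larger, not smaller.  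So the argument as sketched would feed Theorem~\ref{refined1} a quantity it does not control.  A real idea is missing here, not just bookkeeping.

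The paper avoids proving a per-sphere estimate.  Instead it proves the intermediate Theorem~\ref{Sup}: writing $F_{t,j}(\widehat\xi_j)=\sup_{|\xi_j|\ge t}|\widehat f(\xi)|$, it bounds $\sum_j\int_0^\infty F_{t,j}^{**}(t^{n-1})\,dt\le c\|\nabla f\|_1$.  Taking the supremum over $|\xi_j|\ge t$ \emph{before} any rearrangement or integration packages the cap direction into a single function of $\widehat\xi_j$, so the subsequent manipulation is of a fixed function, via (\ref{supremum}), Hölder over sets of measure $t^{n-1}$, and Hausdorff--Young.  Theorem~\ref{Obertype1} then follows by the cap projection with Jacobian $\le\sqrt n$ and the observation $\mes_{n-1}B'_r\asymp 2^{k(n-1)}$, so that $2^{k(1-n)}\sup_{r\sim2^k}\int_{S^{\pm}_{r,j}}|\widehat f|\,d\sigma\lesssim F^{**}_{t_k,j}(t_k^{n-1})$ and the dyadic sum is comparable to the integral in Theorem~\ref{Sup}.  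Note also that the paper's multiplier estimate is averaged, $\frac1\tau\int_0^\tau(1-\cos 2\pi\xi_j h)\,dh\ge 1-\frac1{2\pi}$ for $|\xi_j|\ge 1/\tau$; this holds for \emph{all} $|\xi_j|\ge t$, which is needed since $F_{t,j}$ takes a sup over an unbounded range of $\xi_j$, whereas your pointwise bound on $I_r$ is confined to $|\xi_j|\in[r/\sqrt n,r]$ and so could not yield Theorem~\ref{Sup} as such.  In short: the cap decomposition and final steps you outline coincide with the paper's Theorem~\ref{Obertype1}, but the essential bridge --- Theorem~\ref{Sup} with its sup-function $F_{t,j}$ and iterated-rearrangement estimate --- is the piece you have not supplied.
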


For $n\ge 3$ this theorem gives a refinement of the Hardy type inequality
\begin{equation*}
\int_{\R^n} |\widehat{f}(\xi)||\xi|^{1-n}\,d\xi\le c
||\nabla f||_1,
\end{equation*}
which was proved for $f\in W_1^1(\R^n) \,\, (n\ge 2)$ by
Bourgain \cite{Bour1}  and  Pe\l czy\'nski and  Wojciechowski
\cite{PeWo}.

As in the case $p=1$ in Theorem \ref{refined1}, {{\it it is an open
question whether Theorem \ref{Obertype100} is true for $n=2.$}}
\vskip 4pt

The paper is organized as follows. We give some definitions and auxiliary results in Section 2.
In Section 3 we prove inequalities between Besov norms built upon the spaces $L^{p,\nu}(\R^n)$ and $L^{p,\nu}(\R^{n-1})[L^r(\R)],$ $1\le r, \nu\le p$. In Section 4 we prove Theorem \ref{refined1}.  Section 5 contains the proof of  Theorem \ref{hardy_refined}.   Section 6 is devoted to estimates of Fourier transforms of functions in $W_1^1(\R^n).$

\section{Some definitions and auxiliary results}

 Denote by $S_0(\mathbb{R}^n)$ the class of all measurable and
almost everywhere finite functions $f$ on $\mathbb{R}^n$ such that
\begin{equation*}
\lambda_f (y) = | \{x \in \mathbb{R}^n : |f(x)|>y \}| <
\infty\quad \text{for each $y>0$}.
\end{equation*}
A nonincreasing rearrangement of a function $f \in
S_0(\mathbb{R}^n)$ is a nonnegative and nonincreasing function $f^*$ on
$\mathbb{R}_+ = (0, + \infty)$ which is equimeasurable with $|f|$, that is, $\lambda_{f^*}=\lambda_f.$
The rearrangement $f^*$ can be defined by the equality
\begin{equation}\label{rearrangement}
f^*(t) = \sup_{|E|=t} \inf_{x \in E} |f(x)|,\quad 0<t<\infty
\end{equation}
(see \cite[p. 32]{ChR}).

The following relation holds \cite[p. 53]{BS}
\begin{equation} \label{supremum}
\sup_{|E|=t} \int_E |f(x)| dx = \int_0^t f^*(u) du \,.
\end{equation}
In what follows we denote
\begin{equation}\label{def**}
f^{**}(t)= \frac{1}{t} \int_0^t f^*(u) du.
\end{equation}
For any $t>0$ there is a subset $E\subset \R^n$ with $|E|=t$ such that
\begin{equation} \label{supremum2}
\frac{1}{t} \int_E|f(x)|dx=f^{**}(t)
\end{equation}
(see \cite[p. 53]{BS}).

Let $0<p,r<\infty.$ A
function $f \in S_0(\mathbb{R}^n)$ belongs to the Lorentz space
$L^{p,r}(\mathbb{R}^n)$ if
\begin{equation*}
\|f\|_{L^{p,r}}=\|f\|_{p,r} = \left( \int_0^\infty \left( t^{1/p} f^*(t)
\right)^r \frac{dt}{t} \right)^{1/r} < \infty.
\end{equation*}

We have that $||f||_{p,p}=||f||_p.$ For a
fixed $p$, the Lorentz spaces $L^{p,r}$ strictly increase as the secondary
index $r$ increases; that is, the strict embedding $L^{p,r}\subset L^{p,s}~~~(r<s)$ holds (see \cite[Ch. 4]{BS}).

\vskip 6pt
We will use the following  Hardy's inequality (see \cite[p. 124]{BS}).

\begin{prop}\label{hardy}
Let $\f$ be a nonnegative
measurable function on $(0,\infty)$ and suppose $-\infty<\lambda<1$ and $1\le p<\infty.$ Then
$$
\biggl(\int_0^{\infty}\Big(t^{\lambda-1}\int_0^t\f(u)du\Big)^p\frac{dt}{t}\biggr)
^{1/p}
\le \frac{1}{1-\lambda}\left(\int_0^\infty\left(t^{\lambda}\f(t)\right)^p\frac{dt}{t}
\right)^{1/p}.
$$
\end{prop}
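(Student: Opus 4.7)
The plan is to prove Hardy's inequality by a rescaling trick combined with Minkowski's integral inequality. First I would perform the change of variables $u = ts$ with $s \in (0,1)$ in the inner integral, which gives
\[
t^{\lambda - 1}\int_0^t \varphi(u)\,du \;=\; t^{\lambda}\int_0^1 \varphi(ts)\,ds.
\]
This transformation puts the expression into a form where the dependence on $t$ appears only through $\varphi(ts)$ and a clean power of $t$, which is exactly what is needed to apply Minkowski's integral inequality.

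Next, since $p \ge 1$, I would apply Minkowski's integral inequality to the $L^p(dt/t)$ norm against the outer $s$-integral, obtaining
\[
\left(\int_0^\infty \Bigl(t^\lambda \int_0^1 \varphi(ts)\,ds\Bigr)^p \frac{dt}{t}\right)^{1/p} \;\le\; \int_0^1 \left(\int_0^\infty \bigl(t^\lambda \varphi(ts)\bigr)^p \frac{dt}{t}\right)^{1/p} ds.
\]
Then I would evaluate the inner integral by the substitution $\tau = ts$: the measure $dt/t$ is scale-invariant, while $t^\lambda = s^{-\lambda}\tau^\lambda$, so
\[
\left(\int_0^\infty \bigl(t^\lambda \varphi(ts)\bigr)^p \frac{dt}{t}\right)^{1/p} \;=\; s^{-\lambda}\left(\int_0^\infty \bigl(\tau^\lambda \varphi(\tau)\bigr)^p \frac{d\tau}{\tau}\right)^{1/p}.
\]

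Factoring out the right-hand norm, the desired bound reduces to the computation of $\int_0^1 s^{-\lambda}\,ds = 1/(1-\lambda)$, which is finite precisely because $\lambda < 1$; this is exactly where the constant $1/(1-\lambda)$ appears and where the hypothesis on $\lambda$ is used in an essential way. The main obstacle is just identifying the rescaling $u = ts$; once that is in place, Minkowski and the elementary power-law integral deliver the sharp constant automatically. The order of steps (rescale, then Minkowski, then unscale) is what makes the two absolute scales of $t$ and $\tau$ interchangeable and yields the inequality for nonnegative $\varphi$ by Tonelli without any measurability subtleties.
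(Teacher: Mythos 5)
Your proof is correct and is the standard argument for Hardy's inequality: rescale $u = ts$ to expose $t^\lambda\int_0^1\varphi(ts)\,ds$, apply Minkowski's integral inequality (valid since $p\ge 1$), unscale via $\tau=ts$ using the dilation invariance of $dt/t$, and compute $\int_0^1 s^{-\lambda}\,ds = 1/(1-\lambda)$. The paper does not supply its own proof but cites \cite[p.~124]{BS}, and the proof given there is essentially this same rescaling-plus-Minkowski argument, so you have reproduced the referenced approach.
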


Applying Hardy's inequality with $p>1,$ $\lambda=1/p$, we obtain that the  operator $f\mapsto f^{**}$  is bounded in $L^p$ for $p>1,$
\begin{equation} \label{bound}
||f^{**}||_p\le \frac{p}{p-1}||f||_p,\quad 1<p\le \infty.
\end{equation}

We say that a measurable function $\psi$ on $(0,\infty)$ is
quasi-decreasing if there exists a constant $c>0$ such that $\psi(t_1)\le c\psi(t_2),$
whenever $0<t_2<t_1<\infty$.

It is well known that in the case $0<p<~1$ Hardy-type inequalities are true for quasi-decreasing
functions. We will use the following proposition
(a short proof can be found, e.g., in \cite{KL}).

\begin{prop}\label{hardy-type}
Let $\psi$ be a non-negative, quasi-decreasing function on
$(0,\infty).$  Suppose also that $\a>0, \b>-1$
and $0<p<1$. Then
$$
\int_0^{\infty}{u}^{-\a-1}\Big(\int_0^{u}\psi(t)t^{\b}dt\Big)^pdu
\le c\int_0^{\infty}
u^{-\a-1}\big(\psi(u)u^{\b+1}\big)^pdu.
$$
\end{prop}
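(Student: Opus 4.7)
The plan is to use the quasi-decreasing hypothesis to localize the inner integral on a dyadic scale, then exploit the elementary subadditivity of $x \mapsto x^p$ for $0<p<1$ (which substitutes for the usual Hardy inequality that fails in this range).

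First I would decompose
\[
\int_0^u \psi(t) t^{\beta}\,dt = \sum_{k=0}^\infty \int_{u/2^{k+1}}^{u/2^k} \psi(t) t^{\beta}\,dt.
\]
On the interval $(u/2^{k+1}, u/2^k)$ every $t$ satisfies $t > u/2^{k+1}$, so the quasi-decreasing property yields $\psi(t) \le c\,\psi(u/2^{k+1})$. Using $\beta+1>0$, an easy computation gives $\int_{u/2^{k+1}}^{u/2^k} t^{\beta}\,dt = C_{\beta}\, (u/2^{k+1})^{\beta+1}$, whence
\[
\int_0^u \psi(t) t^{\beta}\,dt \le c \sum_{k=0}^\infty \psi(u/2^{k+1})\,(u/2^{k+1})^{\beta+1}.
\]

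Next, since $0<p<1$, I would apply the pointwise inequality $(\sum_j a_j)^p \le \sum_j a_j^p$ (valid for $a_j \ge 0$) to bring the exponent inside the sum:
\[
\Big(\int_0^u \psi(t) t^{\beta}\,dt\Big)^p \le c^p \sum_{k=0}^\infty \big(\psi(u/2^{k+1})(u/2^{k+1})^{\beta+1}\big)^p.
\]
Then I would multiply by $u^{-\alpha-1}$, integrate over $(0,\infty)$, and interchange sum and integral by Tonelli. The change of variable $u = 2^{k+1}v$ transforms the $k$-th term into
\[
2^{-(k+1)\alpha}\int_0^\infty v^{-\alpha-1}\big(\psi(v)\,v^{\beta+1}\big)^p\,dv,
\]
and the geometric series $\sum_{k\ge 0} 2^{-(k+1)\alpha}$ converges because $\alpha>0$. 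Summing yields the asserted inequality with an explicit constant.

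There is no real obstacle; the argument is essentially a packaging exercise. What is worth noting is the precise role of each hypothesis: $0<p<1$ is used exactly once, to distribute the $p$-th power over the dyadic sum; quasi-monotonicity is used exactly once, to replace $\psi(t)$ by $\psi(u/2^{k+1})$ on each dyadic block; $\beta>-1$ ensures the inner $t^{\beta}$ integral is $\sim (u/2^{k+1})^{\beta+1}$; and $\alpha>0$ ensures the final geometric series converges. No deeper rearrangement or duality machinery is required.
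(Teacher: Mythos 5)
Your argument is correct. Note that the paper itself does not prove Proposition~\ref{hardy-type}; it simply refers to Kolyada--Lerner (Studia Math.\ \textbf{171} (2005)) for a short proof, so there is no in-paper proof to compare against. The route you take --- dyadic decomposition of $\int_0^u$, the quasi-decreasing bound $\psi(t)\le c\,\psi(u/2^{k+1})$ on each annulus, the exact value $C_\beta\,(u/2^{k+1})^{\beta+1}$ of the annular $t^\beta$-integral (using $\beta>-1$), the $\ell^p$-subadditivity $\bigl(\sum_k a_k\bigr)^p\le\sum_k a_k^p$ for $0<p<1$, and finally the convergent geometric series $\sum_k 2^{-(k+1)\alpha}$ for $\alpha>0$ --- is the standard proof of such reverse Hardy inequalities for quasi-monotone weights, and each hypothesis is used exactly where it is needed. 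The interchange of sum and integral is justified by Tonelli since all terms are non-negative. Nothing is missing.
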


Let a function $\f\in L^p(\R)$. Set
\begin{equation}\label{one}
\Delta(h)\f(x)=\f(x+h)-\f(x), \quad h\in \R,
\end{equation}
and
$$
\o(\f;t)_p=\sup_{|h|\le t}||\Delta(h)\f||_p, \quad t\ge 0.
$$
 Ul'yanov \cite{U} proved the following estimate: for any $\f\in L^p(\R),\,1\le p <\infty$
 $$
 \f^{**}(t)-\f(t)\le 2t^{-1/p}\o(\f;t)_p.
 $$
 It easily follows that
\begin{equation}\label{Ulyanov1}
\f^*(t)\le 2\int_t^\infty s^{-1/p}\o(\f;s)_p\frac{ds}{s}
\end{equation}
(see also \cite[p. 149]{K98}, \cite{Stor}).
Using these estimates, Ul'yanov obtained that if $1\le p <q<\infty$ and  $\f\in L^p(\R)$, then
\begin{equation}\label{Ulyanov2}
||\f||_q\le c \left(\int_0^\infty t^{-q/p}||\Delta(t)\f||_p^q\,dt\right)^{1/q}
\end{equation}
and
\begin{equation}\label{Ulyanov3}
\o(\f;\d)_q\le c\left(\int_0^\d t^{-q/p}||\Delta(t)\f||_p^q\,dt\right)^{1/q}
\end{equation}
(some discussions and generalizations of these results can be found in    \cite{K98} and \cite{K07}).

In the next section we consider functions $(x,y)\mapsto f(x,y),$ where
 $x\in \R, \,\, y\in \R^{n-1},$ and we denote
\begin{equation}\label{several}
\Delta_1(h)f(x,y)=f(x+h,y)-f(x,y), \quad h\in \R.
\end{equation}
Let $V=V(\R^n)$ be a Banach function space over $\R^n$ (see \cite[Ch. 1]{BS}). We shall assume that $V$ is translation invariant, that is,
whenever $f\in V,$ then $\tau_h f\in V$ and $||\tau_h f||_V=||f||_V$ for all $h\in \R^n$, where
$\tau_h f(x)=f(x-h).$
Let $f\in V$. Set
$$
\o_1(f;\d)_V=\sup_{|h|\le \d}||\Delta_1(h)f||_V, \quad \d\ge 0.
$$
In these notations, the  subindex 1 indicates that the difference is taken with respect to the first variable $x.$

We have the following inequality
\begin{equation}\label{omega1}
\o_1(f;\delta)_V\le \frac{3}{\d}\int_0^\d ||\Delta_1(h)f||_Vdh.
\end{equation}
Indeed, if $t, h\in [0,\d],$ then
$$
||\Delta_1(t)f||_V\le ||\Delta_1(h)f||_V + ||\Delta_1(t-h)f||_V.
$$
Integrating with respect to $h$ in $[0,\d]$ (for a fixed $t\in [0,\d]$),  and then taking supremum over $t,$ we obtain (\ref{omega1}).

\vskip 6pt

\section{Different norm inequalities}

\vskip 6pt

Throughout this paper we use the notation (\ref{Delta}).

Let $0<\a<1,$  $1\le p<\infty$, and $1\le \t<\infty.$
 The Besov space
$B^\a_{p,\t}(\mathbb R^n)$ consists of all functions $f\in
L^p(\mathbb R^n)$ such that
$$
\|f\|_{B^\a_{p,\t}}=||f||_p+
\sum_{k=1}^n\Big(\int_0^{\infty}\big(t^{-\a}||\Delta_k(t)f||_p\big)^\t\,
\frac{dt}{t}\Big)^{1/\t}<\infty.
$$

\vskip 6pt

The classical different norm embedding theorem states that if $1\le p<q<\infty$ and $\a>n(1/p-1/q),$ then for any $1\le\t<\infty$
$$
B_{p,\t}^\a(\R^n)\subset B_{q,\t}^\b(\R^n),\quad\mbox{where} \quad \b= \a-n(1/p-1/q),
$$
and for any $f\in B_{p,\t}^\a(\R^n)$
\begin{equation}\label{diff}
||f||_{B^\b_{q,\t}}\le c||f||_{B^\a_{p,\t}}
\end{equation}
(see \cite[Ch. 6]{Nik}).

Roughly speaking, passing from $L^p$ to $L^q$, we lose $n(1/p-1/q)$ in the  smoothness exponent.

We shall be especially interested in the one-dimensional case of this theorem.
Note that   for $n=1$ (\ref{diff}) follows immediately from (\ref{Ulyanov2}), (\ref{Ulyanov3}) and Hardy's inequality.

In this section we obtain different norm inequalities for the Besov spaces defined in some mixed norms. First of all, we are interested in these results in connection with embeddings of Sobolev spaces (in particular, for the comparison of Theorems \ref{refined1} and \ref{1988}).

We keep  notations introduced in Section 2. Namely, we use the notation $\Delta(h)\f$ for functions of one variable (see (\ref{one})). The  notation $\Delta_1(h)f$  (see (\ref{several})) is applied to functions $(x,y)\mapsto f(x,y),$ where
 $x\in \R, \,\, y\in \R^{n-1}\,\,(n\ge 2)$.

Let $1\le \t<\infty, \,\, 0<\a<1.$ Let $V=V(\R^n)\,\,(n\ge 2)$ be a translation invariant Banach function space. Denote by $B^\a_{\t;1}(V)$ the class of all functions $f\in V$ such that
$$
||f||_{B^\a_{\t;1}(V)}= ||f||_V+\left(\int_0^\infty[h^{-\a}\o_1(f;h)_V]^\t \frac{dh}{h}\right)^{1/\t}<\infty.
$$
As above, the  subindex 1 indicates that the difference is taken with respect to the first variable $x.$  Applying (\ref{omega1}) and Hardy's inequality, we obtain that
\begin{equation}\label{equivalence}
\int_0^\infty[h^{-\a}\o_1(f;h)_V]^\t \frac{dh}{h}\le c\int_0^\infty[h^{-\a}||\Delta_1(h)f||_V]^\t \frac{dh}{h}.
\end{equation}

As in Introduction, if $X(\R)$ and $Y(\R^{n-1})$ are Banach function spaces, we denote by $Y[X]_1$ the mixed norm space obtained by taking first the norm in $X(\R)$ with respect to the variable $x$, and then the norm in $Y(\R^{n-1})$ with respect to $y.$
In this section the interior norm will be taken only in variable $x.$ Therefore  in this section we  write simply $Y[X]$ (omitting the subindex 1).

First, we have the following simple proposition.
\begin{prop}\label{simple} Let $1\le\t<\infty,$ $1\le r<p<\infty$, and $1/r-1/p<\a<1$. Set $\b=\a-1/r+1/p.$ Then
$B^\a_{\t;1}(L^p[L^r])\subset B^\b _{\t;1}(L^p(\R^n))$; more exactly, for any $f\in B^\a_{\t;1}(L^p[L^r])$
\begin{equation}\label{simple1}
||f||_p\le c ||f||_{B^\a_{\t;1}(L^p[L^r])}
\end{equation}
and
\begin{equation}\label{simple2}
\int_0^\infty h^{-\t\b}||\Delta_1 (h)f||_p^\t\frac{dh}{h} \le c\int_0^\infty h^{-\t\a} ||\Delta_1(h)f||^\t_{L^p[L^r]}\frac{dh}{h}.
 \end{equation}
\end{prop}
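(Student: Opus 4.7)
The plan is to apply the one-dimensional Ul'yanov inequalities (\ref{Ulyanov2}) and (\ref{Ulyanov3}) in the variable $x$ for each fixed $y \in \R^{n-1}$, then integrate in $y$ via Fubini, and finally reduce the resulting estimates to Hardy-type inequalities in $t$.

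To prove (\ref{simple1}), fix $y$ and set $\f_y(x) = f(x,y)$. Since $r < p$, the Ul'yanov inequality (\ref{Ulyanov2}), applied with its $p$ and $q$ taken as $r$ and $p$, gives
$$\|\f_y\|_{L^p(\R)}^p \le c \int_0^\infty t^{-p/r} \|\Delta(t)\f_y\|_{L^r(\R)}^p\,dt.$$
Integrating in $y$ and using Fubini yields $\|f\|_p^p \le c \int_0^\infty t^{-p/r} \|\D_1(t)f\|_{L^p[L^r]}^p\,dt$. I would split this integral at $t=1$. The tail over $(1,\infty)$ is controlled by $c\|f\|_{L^p[L^r]}^p$ because $\|\D_1(t)f\|_{L^p[L^r]} \le 2\|f\|_{L^p[L^r]}$ and $p/r > 1$. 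For the part over $(0,1)$, the pointwise bound $\o_1(f;t)_{L^p[L^r]} \le cBt^\a$ — where $B$ is the Besov seminorm of $f$, obtained by testing $B^\t = \int [h^{-\a}\o_1]^\t\,dh/h$ against the interval $[t,2t]$ and using monotonicity of $\o_1$ — combined with the hypothesis $\a > 1/r - 1/p$ (which makes $\int_0^1 t^{-p/r + p\a}\,dt$ finite) gives the desired estimate.

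For (\ref{simple2}), I apply the modulus version (\ref{Ulyanov3}) fiberwise to $\f_y$ and integrate in $y$ to get the key pointwise-in-$h$ bound
$$\|\D_1(h)f\|_p^p \le c \int_0^h t^{-p/r} \|\D_1(t)f\|_{L^p[L^r]}^p\,dt.$$
It then suffices to control the left-hand side of (\ref{simple2}) in terms of this integral. In the subcase $\t \ge p$ (so $\t/p \ge 1$), this reduces to Proposition \ref{hardy} applied with $\lambda = 1 - p\b < 1$ (the hypothesis $\a > 1/r - 1/p$ guarantees $\b > 0$); a direct exponent computation using $\b + 1/r - 1/p = \a$ then produces (\ref{simple2}).

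The main obstacle is the subcase $\t < p$, where Proposition \ref{hardy} is unavailable. I would overcome it by first replacing $\|\D_1(t)f\|_{L^p[L^r]}$ inside the integral by the larger $\o_1(f;t)_{L^p[L^r]}$, and then exploiting that $\o_1(f;t)_{L^p[L^r]}$ is subadditive in $t$, hence $\o_1(f;st) \le (s+1)\o_1(f;t)$ for $s \ge 1$, so that $\tilde\Omega(t) := \o_1(f;t)_{L^p[L^r]}^p / t^p$ is quasi-decreasing. Rewriting $t^{-p/r}\o_1(f;t)_{L^p[L^r]}^p = \tilde\Omega(t) \cdot t^{p(1-1/r)}$ with exponent $p(1-1/r) \ge 0 > -1$ and applying Proposition \ref{hardy-type} with exponent $\t/p \in (0,1)$ and its $\a$-parameter equal to $\t\b > 0$ yields the required control; the exponent arithmetic again reduces to $\a = \b + 1/r - 1/p$, and a final appeal to (\ref{equivalence}) converts the resulting $\o_1$-integral back into the $\|\D_1(\cdot)f\|$-integral on the right of (\ref{simple2}).
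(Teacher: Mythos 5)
Your proof is correct and takes essentially the same approach as the paper's: apply the one-dimensional Ul'yanov inequalities fiberwise, integrate in $y$ via Fubini, and then split into the cases $\t\ge p$ (Hardy's inequality, Proposition \ref{hardy}, with $\lambda=1-p\b$) and $\t<p$ (Hardy-type inequality for quasi-decreasing functions, Proposition \ref{hardy-type}, applied to $\o_1(f;t)_V/t$, followed by (\ref{equivalence})). You are somewhat more explicit than the paper — in particular you spell out the splitting at $t=1$ and the pointwise bound $\o_1(f;t)_V\le cBt^\a$ that the paper compresses into a citation of ``standard reasonings'' — but the structure and all the key lemmas coincide.
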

\begin{proof} Denote $V=L^p[L^r].$ Let  $f\in B^\a_{\t;1}(V)$.
For a fixed $y\in \R^{n-1}$, set $f_y(x)=f(x,y),\,\,x\in \R.$
By (\ref{Ulyanov2}), we have
$$
||f_y||_p^p\le c \int_0^\infty t^{-p/r}||\Delta(t)f_y||_r^pdt.
$$
Integration with respect to $y$ gives
$$
||f||_p^p\le c \int_0^\infty t^{-p/r}||\Delta_1(t)f||_V^pdt.
$$
Applying standard reasonings (see, e.g., \cite[Ch. 5.4]{BS}), we get
$$
\begin{aligned}
&\left(\int_0^\infty t^{-p/r}||\Delta_1(t)f||_{V}^pdt\right)^{1/p}\\
&\le c \left[||f||_{V}+\left(\int_0^1[t^{-\t\a}||\Delta_1(t)f||_{V}]^\t \frac{dt}{t}\right)^{1/\t}\right].
\end{aligned}
$$
These estimates imply (\ref{simple1}).

Further, inequality (\ref{Ulyanov3}) gives that
$$
||\Delta (h)f_y||_p^p\le c\int_0^h ||\Delta (t)f_y||_r^p t^{-p/r}dt.
$$
Integrating with respect to $y$, we get
$$
\int_{\R^{n}}|\Delta_1 (h)f(x,y)|^p(x,y)dxdy= \int_{\R^{n-1}}||\Delta (h)f_y||_p^pdy
$$
$$
\le c \int_{\R^{n-1}}\int_0^h ||\Delta (t)f_y||_r^p t^{-p/r}dtdy=c\int_0^h ||\Delta_1(t)f||^p_{V}t^{-p/r}dt.
$$
This implies that
$$
\int_0^\infty h^{-\t\b}||\Delta_1 (h)f||_p^\t
 \frac{dh}{h}
 $$
 $$
 \le c \int_0^\infty h^{-\t\b}\left(\int_0^h ||\Delta_1(t)f||^p_{V}t^{-p/r}dt\right)^{\t/p}
 \frac{dh}{h}.
 $$
 If $\t\ge p$, then we apply Proposition \ref{hardy} and we obtain (\ref{simple2}).
 Let $\t<p$. Observe that the function
 $\psi(t)=\o_1(f;t)_V/t$
is quasi-decreasing. Hence, applying
  Proposition  \ref{hardy-type} and inequality (\ref{equivalence}), we get
$$
\begin{aligned}
 &\int_0^\infty h^{-\t\b}||\Delta_1 (h)f||_p^\t
 \frac{dh}{h}\\
&\le c \int_0^\infty h^{-\t\b}\left(\int_0^h \o_1(f;t)^p_{V}t^{-p/r}dt\right)^{\t/p}
 \frac{dh}{h}\\
&\le c'\int_0^\infty h^{-\t\a}\o_1(f;h)^\t_{V}\frac{dh}{h}\le c''\int_0^\infty h^{-\t\a}||\Delta_1(h)f||^\t_{V}\frac{dh}{h}.
\end{aligned}
$$
This implies (\ref{simple2}).
 \end{proof}

Note that,  in contrast to (\ref{diff}), the loss in the smoothness exponent given by (\ref{simple2}) is only $1/r-1/p.$ It is natural because the integrability exponent changes in only one variable.

Now, we replace the $L^p$-norm  in (\ref{simple1}) and (\ref{simple2}) by the $L^{p,\nu}-$Lorentz norm.  In this case simple arguments similar
to those given above cannot be applied. Indeed, it was shown by Cwikel \cite{Cwikel} that if $p\not = \nu,$ then neither  of the spaces
$L^{p,\nu}(\R^2)$ and $L^{p,\nu}(\R)[L^{p,\nu}(\R)]$ is contained in the other. Therefore we apply different methods; namely, we shall use iterated rearrangements.

Let $g\in S_0(\R^n), \,\, n\ge 2.$ For a fixed $y\in\R^{n-1},$ denote by $\mathcal R_1 g(s,y)$ the nonincreasing rearrangement of the function $g_y(x)=g(x,y), \,\, x\in \R.$ Further, for a fixed $s>0$, let $\mathcal R_{1,2}g(s,t)$ be the nonincreasing rearrangement of the function $y\mapsto \mathcal R_1 g(s,y),\,\,y\in\R^{n-1}.$

    The iterated rearrangement
$\mathcal R_{1,2} g$  is defined on $\R_+^2.$ It is nonnegative, nonincreasing in each variable,
and equimeasurable
with  $|g|$ function (see \cite{Bl, K01, K07}).

 Let $0<p, \nu<\infty,$ and $n\ge 2$.  For a function $g\in S_0(\R^n),$ denote
$$
\| g\| _{\mathcal L^{p,\nu}}=\left(\int_{\Bbb R _+^2}
(st)^{\nu/p-1}\mathcal R_{1,2}g(s,t)^\nu\,ds
dt\right) ^{1/\nu}
$$
(see \cite{Bl}).
The following inequalities hold \cite{Ya}:
\begin{equation}\label{const1}
\| g\| _{p,\nu}\le  c\| g\|
_{\mathcal L^{p,\nu}}\quad\text{if}\quad 0<\nu\le p<\infty
\end{equation}
and
\begin{equation}\label{const2}
\| g\| _{\mathcal L^{p,\nu}}\le  c'\| g\|
_{p,\nu}\quad\text{if}\quad 0<p\le \nu<\infty.
\end{equation}

\begin{prop}\label{Strong} Let $1\le\t<\infty,$ $1\le \nu\le p<\infty$, $1\le r<p$, and $1/r-1/p<\a<1$.
Set $\b=\a-1/r+1/p.$ Then
$B^\a_{\t;1}(L^{p,\nu}[L^r])\subset B^\b_{\t;1}(L^{p,\nu})$; more exactly, for any $f\in B^\a_{\t;1}(L^{p,\nu}[L^r])$
\begin{equation}\label{strong1}
||f||_{L^{p,\nu}}\le c||f||_{B^\a_{\t;1}(L^{p,\nu}[L^r])}
\end{equation}
and
\begin{equation}\label{strong10}
\int_0^\infty h^{-\t\b}||\Delta_1 (h)f||_{L^{p,\nu}}^\t\frac{dh}{h}\le c\int_0^\infty h^{-\t\a}||\Delta_1(h)f||_{L^{p,\nu}[L^r]}^\t \frac{dh}{h}.
\end{equation}
\end{prop}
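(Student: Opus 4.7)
The plan is to use the iterated rearrangement $\mathcal{R}_{1,2}f$ together with the inequality (\ref{const1}), which is available here since $\nu\le p$. These tools reduce the estimate of a Lorentz norm on $\mathbb{R}^n$ to a one-dimensional problem on the sections $f_y(x)=f(x,y)$, which can be handled by Ul'yanov's inequality (\ref{Ulyanov1}); the dependence on $y$ is reassembled by Minkowski's integral inequality in $L^{p,\nu}(\mathbb{R}^{n-1})$, which is a Banach space in the assumed range (automatically $p>1$ because $r<p$ and $r\ge 1$).

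For (\ref{strong1}) I would start from
\[
\|f\|_{L^{p,\nu}}^\nu \le c\,\|f\|_{\mathcal{L}^{p,\nu}}^\nu = c\int_0^\infty s^{\nu/p-1}\bigl\|\mathcal{R}_1 f(s,\cdot)\bigr\|_{L^{p,\nu}(\mathbb{R}^{n-1})}^\nu\,ds.
\]
For each $y$ one applies (\ref{Ulyanov1}) to $f_y$, takes the $L^{p,\nu}_y$-norm by Minkowski, and, writing $V=L^{p,\nu}[L^r]$, uses (\ref{omega1}) together with Minkowski to majorize $\|\omega(f_\cdot;u)_r\|_{L^{p,\nu}_y}$ by $c\,\omega_1(f;u)_V$. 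A Hardy-type estimate for integrals over $(s,\infty)$ then yields
\[
\|f\|_{L^{p,\nu}}^\nu \le C\int_0^\infty u^{-\nu\alpha_0-1}\omega_1(f;u)_V^\nu\,du,\qquad \alpha_0=\tfrac{1}{r}-\tfrac{1}{p}.
\]
Splitting at $u=1$, the part on $(1,\infty)$ is absorbed in $\|f\|_V^\nu$ using $\omega_1(f;u)_V\le 2\|f\|_V$ (integrable because $\nu\alpha_0>0$), while the part on $(0,1)$ is converted to the Besov $B^\alpha_{\theta;1}(V)$-seminorm by a dyadic decomposition; either $\ell^\theta\hookrightarrow\ell^\nu$ works when $\theta\le\nu$, and when $\theta>\nu$ one exploits the extra geometric decay $2^{-k(\alpha-\alpha_0)}$ provided by $\alpha>\alpha_0$ via Hölder to recover the target.

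For (\ref{strong10}) the same machinery is applied to $\Delta_1(h)f$ in place of $f$. Using the elementary bound $\omega(\Delta(h)f_y;u)_r\le 2\min\bigl(\omega(f_y;h)_r,\omega(f_y;u)_r\bigr)$ and splitting the integral in (\ref{Ulyanov1}) at $u=h$, after the $L^{p,\nu}_y$-Minkowski step and a Hardy-type estimate on the truncated interval I would obtain the pointwise-in-$h$ bound
\[
\|\Delta_1(h)f\|_{L^{p,\nu}}^\nu \le C\,h^{-\nu\alpha_0}\omega_1(f;h)_V^\nu + C\int_0^h u^{-\nu\alpha_0-1}\omega_1(f;u)_V^\nu\,du.
\]
Raising both sides to $\theta/\nu$ (with the convex inequality when $\theta\ge\nu$ and the subadditive inequality when $\theta<\nu$), multiplying by $h^{-\theta\beta-1}$, and integrating in $h$, the boundary term immediately produces $\int_0^\infty h^{-\theta\alpha-1}\omega_1(f;h)_V^\theta\,dh$ since $\alpha=\alpha_0+\beta$. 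The integral term is handled by Proposition \ref{hardy} when $\theta\ge\nu$; when $\theta<\nu$, the exponent $\theta/\nu<1$ forces the use of Proposition \ref{hardy-type} applied to the quasi-decreasing function $\psi(u)=\omega_1(f;u)_V^\nu/u^\nu$ (quasi-decreasing because $\omega_1(f;\lambda u)_V\le C\lambda\,\omega_1(f;u)_V$ for $\lambda\ge 1$) with $\beta'=\nu(1-\alpha_0)-1$, and the arithmetic again reproduces the target seminorm. A final application of (\ref{equivalence}) replaces $\omega_1(f;\cdot)_V$ by $\|\Delta_1(\cdot)f\|_V$ on the right-hand side.

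The main obstacle is that, by Cwikel's theorem quoted in the paper, $L^{p,\nu}(\mathbb{R}^n)$ and the mixed-norm space $L^{p,\nu}(\mathbb{R})[L^{p,\nu}(\mathbb{R}^{n-1})]$ are incomparable when $p\ne\nu$, so the elementary Fubini argument used in Proposition \ref{simple} is unavailable; the iterated rearrangement $\mathcal{R}_{1,2}$ combined with (\ref{const1}) is the device that circumvents this. A secondary technical point is the case distinction $\theta\ge\nu$ versus $\theta<\nu$, which plays here the role of the split at $\theta=p$ used in Proposition \ref{simple}, reflecting the $\nu$-th power structure arising from the Lorentz-norm reduction.
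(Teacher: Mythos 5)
Your proposal is correct and follows essentially the same route as the paper: iterated rearrangements combined with (\ref{const1}), Ul'yanov's inequality (\ref{Ulyanov1}), the averaging trick (\ref{omega1}), and the case split $\t\ge\nu$ (Proposition~\ref{hardy}) versus $\t<\nu$ (Proposition~\ref{hardy-type} with a quasi-decreasing function). The only differences are organizational: the paper bounds $\mathcal R_{1,2}\f_h(s,t)$ pointwise via a chosen set $E$ and (\ref{supremum}) and then uses the $g^{**}$-form of the Lorentz norm, whereas you apply Minkowski's integral inequality directly in $L^{p,\nu}_y$ (legitimate since $p>1$ makes it normable); and you supply the dyadic argument for (\ref{strong1}), which the paper leaves to the reader.
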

\begin{proof}
Let $f\in B_{\t,1}^\a(L^{p,\nu}[L^r]).$ Set $\f_h(x,y)=|\Delta_1 (h)f(x,y)|.$ Let $s$ and $h$ be fixed positive numbers. We consider the function $y\mapsto \mathcal R_1 \f_h(s,y),\,\, y\in \R^{n-1}.$ As in Section 2 above (see (\ref{supremum2})), we can state that for any $t>0$  there exists a set $E=E_{s,t,h}\subset \R^{n-1}$ with $\mes_{n-1}E=t$ such that
\begin{equation}\label{strong23}
\mathcal R_{1,2}\f_h(s,t)\le \frac{1}{t} \int_{E} \mathcal R_1 \f_h(s,y)\,dy.
\end{equation}
By (\ref{Ulyanov1}), for any $s>0$
\begin{equation}\label{strong2}
\mathcal R_1 \f_h(s,y)\le 2\int_s^\infty \o(\f_h(\cdot,y);u)_r\frac{du}{u^{1+1/r}}.
\end{equation}
Set $g_{u,h}(y)= \o(\f_h(\cdot,y);u)_r.$ By (\ref{supremum}), we have
\begin{equation}\label{strong24}
\frac1t\int_{E} g_{u,h}(y)\,dy\le g_{u,h}^{**}(t).
\end{equation}
Applying inequalities (\ref{strong23}), (\ref{strong2}), and (\ref{strong24}), we obtain
$$
\mathcal R_{1,2}\f_h(s,t)\le \frac{2}{t}\int_s^\infty \int_{E} g_{u,h}(y)\,dy\frac{du}{u^{1+1/r}}
$$
$$
\le 2\int_s^\infty g_{u,h}^{**}(t)\frac{du}{u^{1+1/r}}.
$$

Further, we shall estimate
$$
||\Delta_1 (h)f||_{\mathcal L^{p,\nu}}^\nu=\int_0^\infty\int_0^\infty(st)^{\nu/p-1}
\mathcal R_{1,2}\f_h(s,t)^\nu dsdt.
$$
Fix $t>0$. Applying Hardy's inequality, we have
$$
\int_0^\infty s^{\nu/p-1}
\mathcal R_{1,2}\f_h(s,t)^\nu ds\le 2^\nu \int_0^\infty s^{\nu/p-1} \left(\int_s^\infty g_{u,h}^{**}(t)\frac{du}{u^{1+1/r}}\right)^\nu ds
$$
$$
\le c\int_0^\infty s^{\nu/p-\nu/r-1}g_{s,h}^{**}(t)^\nu ds.
$$
Thus,
$$
\begin{aligned}
&||\Delta_1 (h)f||_{\mathcal L^{p,\nu}}^\nu=\int_{\Bbb R _+^2}
(st)^{\nu/p-1}\mathcal R_{1,2}\f_h(s,t)^\nu\,dsdt\\
&\le c\int_0^\infty s^{\nu/p-\nu/r-1}
\int_0^\infty t^{\nu/p-1}g_{s,h}^{**}(t)^\nu dtds\\
&\le  c'\int_0^\infty s^{\nu/p-\nu/r-1}||g_{s,h}||_{L^{p,\nu}}^\nu ds.
\end{aligned}
$$
By (\ref{omega1}), we have
$$
g_{s,h}(y)= \o(\f_h(\cdot,y);s)_r\le \frac{c}{s}\int_0^s ||\D(u)\f_h(\cdot,y)||_rdu.
$$
Thus, by the Minkowski inequality,
$$
||g_{s,h}||_{L^{p,\nu}}\le \frac{c}{s}\int_0^s ||\Delta_1(u)\f_h||_V du, \quad\mbox{where}\quad V=L^{p,\nu}[L^r].
$$
Using this estimate and applying Hardy's inequality, we obtain
$$
||\Delta_1 (h)f||_{\mathcal L^{p,\nu}}^\nu\le c \int_0^\infty s^{\nu/p-\nu/r-1}\left(\frac{1}{s}\int_0^s ||\Delta_1(u)\f_h||_V du\right)^\nu ds
$$
$$
\le c'\int_0^\infty s^{\nu/p-\nu/r-1}||\Delta_1(s)\f_h||_V^\nu ds.
$$
Obviously,
$$
||\D_1(s)\f_h||_V\le 2||\D_1(\min(s,h))f||_V.
$$
Thus,
$$
\begin{aligned}
&\int_0^\infty h^{-\t\b}||\Delta_1 (h)f||_{\mathcal L^{p,\nu}}^\t\frac{dh}{h}\\
&\le c \int_0^\infty h^{-\t\b}\left(\int_0^\infty s^{\nu/p-\nu/r-1}||\D_1(\min(s,h))f||_V^\nu ds\right)^{\t/\nu}\frac{dh}{h}\\
&\le c'\left[ \int_0^\infty h^{-\t\b}\left(\int_0^h s^{\nu/p-\nu/r-1}||\D_1(s)f||_V^\nu ds\right)^{\t/\nu}\frac{dh}{h}\right.\\
&+\left.\int_0^\infty h^{-\t\b}||\D_1(h)f||_V^\t\left(\int_h^\infty s^{\nu/p-\nu/r-1} ds\right)^{\t/\nu}\frac{dh}{h}\right]\equiv c(I_1+I_2).
\end{aligned}
$$
First,
\begin{equation}\label{strong1000}
I_2=c\int_0^\infty h^{-\t\a}||\D_1(h)f||_V^\t \frac{dh}{h}.
\end{equation}
Further, if  $\t>\nu$,  then by Proposition \ref{hardy}   we obtain
\begin{equation}\label{strong10000}
I_1\le c\int_0^\infty h^{-\t\a}||\D_1(h)f||_V^\t \frac{dh}{h}.
\end{equation}
If $\t\le\nu,$ we obtain estimate (\ref{strong10000}) exactly as in Proposition \ref{simple}. Namely, using the fact that the function
 $\psi(t)=\o_1(f;t)_V/t$
is quasi-decreasing, we apply
  Proposition  \ref{hardy-type} and inequality (\ref{equivalence}).
Estimates (\ref{strong1000}) and (\ref{strong10000}) give that
$$
\int_0^\infty h^{-\t\b}||\Delta_1 (h)f||_{\mathcal L^{p,\nu}}^\t\frac{dh}{h}\le c\int_0^\infty h^{-\t\a}||\D_1(h)f||_V^\t \frac{dh}{h}.
$$
Since $\nu\le p$,  the latter inequality implies
 (\ref{strong10}) (see (\ref{const1})).

 Inequality (\ref{strong1}) follows by similar arguments; we omit the details.

\end{proof}

\vskip 6pt
\begin{rem} In this work we apply Proposition \ref{Strong} only for $\nu=r<p$. It would be interesting to consider other cases and further generalizations in this direction.

\end{rem}

\section{ Embeddings of  Sobolev spaces $W_p^1(\R^n)$}

In this section we prove a refinement of Theorem \ref{1988}.
For $1\le p,q<\infty$ and $k=1,...,n$,  denote by  $V_{q,p,k}(\R^n)$ the mixed norm space
$L^{q,p}(\R^{n-1})[L^p(\R)]_k$
obtained by taking first the norm in $L^p(\R)$ with respect to the variable $x_k$, and then the norm in $L^{q,p}(\R^{n-1})$ with respect to $\widehat x_k.$

We shall use the following simple fact.
\begin{prop}\label{absolute}
Let a function $\f$ be defined on $\R$ and assume that $\f$ is locally absolutely continuous (that is, $\f$ is absolutely continuous in each bounded interval $[a,b]\subset \R).$ Let $\psi=|\f|$. Then $\psi$ also is locally absolutely continuous and
$$
|\psi'(x)|\le |\f'(x)| \quad\mbox{for almost}\quad x\in \R.
$$
\end{prop}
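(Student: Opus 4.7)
The plan has two short steps: first establish local absolute continuity of $\psi=|\f|$, and then derive the pointwise bound on $|\psi'|$ by a direct difference-quotient comparison.

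For local absolute continuity, take any finite collection of pairwise disjoint intervals $\{(a_i,b_i)\}$ inside a bounded interval $[a,b]\subset\R$. The reverse triangle inequality $\bigl||u|-|v|\bigr|\le|u-v|$, applied termwise, gives
$$\sum_i|\psi(b_i)-\psi(a_i)|=\sum_i\bigl||\f(b_i)|-|\f(a_i)|\bigr|\le\sum_i|\f(b_i)-\f(a_i)|.$$
Since $\f$ is absolutely continuous on $[a,b]$, the right-hand side is arbitrarily small whenever $\sum_i(b_i-a_i)$ is sufficiently small; the same therefore holds for the left-hand side, so $\psi$ is locally absolutely continuous.

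For the derivative bound, note that both $\f$ and $\psi$, being locally absolutely continuous, are differentiable almost everywhere on $\R$. At any point $x$ where both derivatives exist, applying the reverse triangle inequality inside the limit gives
$$|\psi'(x)|=\lim_{h\to 0}\frac{\bigl||\f(x+h)|-|\f(x)|\bigr|}{|h|}\le\lim_{h\to 0}\frac{|\f(x+h)-\f(x)|}{|h|}=|\f'(x)|,$$
which is the desired estimate, holding at a.e.\ $x\in\R$.

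There is no real obstacle: the only thing worth being careful about is to verify the absolute continuity of $\psi$ in the first step, so that $\psi'(x)$ is actually meaningful at a.e.\ $x$ before one takes the difference-quotient limit in the second step. A more elaborate alternative that splits $\R$ into $\{\f\ne 0\}$ and $\{\f=0\}$ and invokes the Lebesgue density theorem on the zero set to conclude that $\f'$ and $\psi'$ both vanish there a.e.\ is available, but unnecessary given the direct comparison above.
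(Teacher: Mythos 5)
Your proof is correct and follows essentially the same route the paper takes: the paper simply remarks that the statement follows immediately from the inequality $|\psi(x+h)-\psi(x)|\le|\f(x+h)-\f(x)|$, which is precisely the reverse triangle inequality you invoke both to verify local absolute continuity and to compare the difference quotients at a.e.\ $x$. You merely spell out the details that the paper leaves implicit.
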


Indeed, this statement follows immediately from the inequality
$$
|\psi(x+h)-\psi(x)|\le |\f(x+h)-\f(x)|.
$$

\begin{teo}\label{refined}
Let $1<p<\infty$ and $n\ge 2,$  or $p=1$ and $n\ge 3.$ If $p<q<\infty$ and $\a=1-(n-1)(1/p-1/q)>0$, then for any $f\in W_p^1(\R^n)$
\begin{equation}\label{embed3}
\sum_{k=1}^n \left(\int_0^\infty h^{-\a p}||\D_k(h)f||^p_{V_{q, p,k}}\frac{dh}{h}\right)^{1/p}\le c ||\nabla f||_p.
\end{equation}
\end{teo}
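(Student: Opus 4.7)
By symmetry in the coordinate directions it suffices to prove one term of the sum; take $k=1$ and write $\hat x=\hat x_1\in\R^{n-1}$. Setting
\[F_h(\hat x):=\|\Delta_1(h)f(\cdot,\hat x)\|_{L^p(\R)},\]
the $k=1$ summand of the left-hand side of (\ref{embed3}) equals $(\int_0^\infty h^{-\alpha p}\|F_h\|_{L^{q,p}(\R^{n-1})}^p\,dh/h)^{1/p}$. Two elementary bounds on $F_h$ follow from Minkowski's inequality in the inner $L^p(\R_{x_1})$: by Fubini, $\|F_h\|_{L^p(\R^{n-1})}=\|\Delta_1(h)f\|_{L^p(\R^n)}\le h\|D_1 f\|_p$; and by reverse Minkowski together with the commutation of difference operators, $F_h$ has weak $\hat x$-partial derivatives satisfying $|D_jF_h(\hat x)|\le\|\Delta_1(h)D_jf(\cdot,\hat x)\|_{L^p(\R)}$, so that $\|\nabla_{\hat x}F_h\|_{L^p(\R^{n-1})}\le 2\|\nabla'f\|_p$. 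Proposition~\ref{absolute} is used here to differentiate the absolute value inside the slice $L^p$-norm.

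My intended strategy is to mimic the rearrangement argument of Proposition~\ref{Strong}. Using (\ref{supremum2}), for each $t>0$ pick $E_t\subset\R^{n-1}$ of measure $t$ realizing $F_h^{**}(t)=\frac{1}{t}\int_{E_t}F_h\,d\hat x$; combining H\"older in $\hat x$, the fundamental theorem of calculus in $x_1$, the trivial alternative $\|\Delta_1(h)f\|_{L^p}\le 2\|f\|_{L^p}$, and applying (\ref{supremum}) to the $(n-1)$-dimensional functions $\hat x\mapsto\|D_1f(\cdot,\hat x)\|_{L^p}^{p}$ and $\hat x\mapsto\|f(\cdot,\hat x)\|_{L^p}^{p}$, one obtains the pointwise dichotomy
\[F_h^{**}(t)\le c\min\!\bigl(h\,K(t),\,H(t)\bigr),\]
where $K(t)$ and $H(t)$ are the natural maximal functions of the slice $L^p$-norms of $D_1 f$ and $f$. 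Inserting this bound, performing the explicit $h$-integration $\int_0^\infty h^{-\alpha p-1}\min(h^pA,B)\,dh\le cA^{\alpha}B^{1-\alpha}$ pointwise in $t$ (after Fubini), and using Hardy's inequality (Proposition~\ref{hardy}) to return from $F_h^{**}$ to $F_h^{*}$, reduces the target to the $t$-integral estimate
\[\int_0^\infty t^{p/q-1}K(t)^{p\alpha}H(t)^{p(1-\alpha)}\,dt\le c\|\nabla f\|_p^p.\]

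The main obstacle is precisely this last estimate: a naive H\"older separation would require the function $\hat x\mapsto\|D_1 f(\cdot,\hat x)\|_{L^p(\R)}$ to lie in $L^{q,p}(\R^{n-1})$, whereas the hypothesis $f\in W^1_p(\R^n)$ only provides $L^p(\R^{n-1})$-membership; Sobolev regularity in $\hat x$ of this slice function would require second-order derivatives of $f$, which are not available. The resolution---following the template of Proposition~\ref{Strong}---is to avoid rearranging $D_1 f$ directly and instead begin the argument one step earlier: apply the one-dimensional Ul'yanov rearrangement inequality (\ref{Ulyanov1}) in the $x_1$-variable to $\Delta_1(h)f$ to obtain $\mathcal R_1\Delta_1(h)f(s,\hat x)\le c\int_s^\infty u^{-1/p}\omega(f(\cdot,\hat x);u)_p\,du/u$, then rearrange in $\hat x$ via (\ref{supremum2}), apply the outer Hardy inequality (Proposition~\ref{hardy}), invoke the comparison (\ref{const1}) between $\mathcal L^{q,p}$ and $L^{q,p}$ (applicable since the secondary Lorentz index $p$ satisfies $p\le q$), and bound the arising $L^p$-modulus via (\ref{omega1})--(\ref{equivalence}) together with the $(n-1)$-dimensional Sobolev--Lorentz embedding (\ref{embed0})---whose availability for $p=1$ forces the assumption $n\ge 3$ in the theorem. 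Summing over $k$ gives (\ref{embed3}).
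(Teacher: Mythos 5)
Your first two paragraphs set up the same preliminary reductions as the paper (slice norms $F_h(\hat x)=\|\Delta_1(h)f(\cdot,\hat x)\|_{L^p}$, the trivial bound $F_h\le h\,\|D_1f(\cdot,\hat x)\|_{L^p}$, the split at scale $t\sim h^{n-1}$), but your proposed way of closing the argument has two genuine gaps.

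First, the ``resolution'' paragraph misidentifies the structure of the target norm. The norm $V_{q,p,1}=L^{q,p}(\R^{n-1})[L^p(\R)]_1$ takes the \emph{Lebesgue} $L^p$ norm in $x_1$ and only then a Lorentz norm over $\hat x$; there is no inner rearrangement in $x_1$, so applying Ul'yanov's inequality~(\ref{Ulyanov1}) in $x_1$ and then invoking the iterated-rearrangement comparison~(\ref{const1}) is a category error: $\mathcal L^{q,p}$ is an iterated-rearrangement norm over $\R^n$, not the mixed norm $V_{q,p,1}$, and (\ref{const1}) does not relate the two. Proposition~\ref{Strong} compares two mixed Besov seminorms for a fixed $f$; the present theorem needs to \emph{produce} the $(n-1)$-dimensional Lorentz decay from first-order smoothness, which is an entirely different step.

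Second, and more fundamentally: the route you propose to control the $t$-integral (the min-dichotomy plus H\"older plus the Sobolev--Lorentz embedding~(\ref{embed0}) on $\R^{n-1}$) requires $p<n-1$. But the theorem also asserts (\ref{embed3}) for all $1<p<\infty$ with $n\ge 2$ and $\alpha>0$ --- in particular $p>1,\ n=2$, or any $p\ge n-1$ --- where the $(n-1)$-dimensional Sobolev embedding is unavailable. The paper circumvents this by proving, for $p>1$, the pointwise estimate
$\f_h^*(t)-\f_h^*(2t)\le c\,t^{1/(n-1)}\sum_{j<n}\psi_j^{**}(t)$
(via averaging $\f_h$ over cubes of side $\sim t^{1/(n-1)}$ in $\hat x$), which gains the factor $t^{1/(n-1)}$ without any Sobolev embedding, and then runs an $\varepsilon$-truncation self-improvement: writing $J_2(\e)^{1/p}\le I'+I''(\e)$ with $I''(\e)\le 2^{-1/q}(J_2(\e)^{1/p}+J_1^{1/p})$ and absorbing the $2^{-1/q}J_2(\e)^{1/p}$ term. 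The boundedness of $g\mapsto g^{**}$ on $L^p$ (which fails at $p=1$) is exactly what makes $I'$ finite, which is why the paper switches for $p=1,\ n\ge 3$ to a separate argument using $\|g\|_{(n-1)',1}\le c\sum\|D_jf\|_1$ via (\ref{embed0}) and Proposition~\ref{absolute}. Your write-up does not recognize that the $p>1$ and $p=1$ cases need distinct mechanisms, and the one mechanism you invoke covers neither the case $p>1,\ n=2$ nor $p\ge n-1$ in general.
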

\begin{proof}
We estimate the last term of the sum in (\ref{embed3}). Set
$$
\f_h(\widehat x_n)=\left(\int_\R |\D_n(h)f(x)|^pdx_n\right)^{1/p}
$$
and
$$
\psi_j(\widehat x_n)=\left(\int_\R |D_j f(x)|^pdx_n\right)^{1/p},\quad j=1,...,n.
$$
We consider the integral
\begin{equation}\label{embed5}
J=\int_0^\infty h^{-\a p}K(h)\frac{dh}{h},
\end{equation}
where
$$
K(h)=||\D_n(h)f||_{V_{q,p,n}}^p=\int_0^\infty t^{p/q-1}\f_h^*(t)^pdt.
$$
Set
\begin{equation}\label{embed100}
K_1(h)=\int_{h^{n-1}}^\infty t^{p/q-1}\f_h^*(t)^pdt, \,\,\, K_2(h)=\int_0^{h^{n-1}} t^{p/q-1}\f_h^*(t)^pdt.
\end{equation}
For any $h>0$
$$
|\D_n(h)f(x)|\le \int_0^h |D_n f(x+u e_n)| du.
$$
Raising to the power $p$, integrating over $x_n$ in $\R,$ and applying H\"older's inequality, we obtain
$$
\f_h(\widehat x_n)^p\le \int_\R \left(\int_0^h |D_n f(x+u e_n)| du \right)^p d x_n\le h^p\psi_n(\widehat x_n)^p.
$$
Thus,
\begin{equation}\label{embed101}
\f_h^*(t)\le h\psi_n^*(t).
\end{equation}
From here (see (\ref{embed100}))
$$
K_1(h)\le h^p\int_{h^{n-1}}^\infty t^{p/q-1}\psi_n^*(t)^p dt
$$
and therefore
$$
\begin{aligned}
J_1&= \int_0^\infty h^{-\a p}K_1(h)\frac{dh}{h}\le \int_0^\infty h^{(1-\a)p}\int_{h^{n-1}}^\infty t^{p/q-1}\psi_n^*(t)^p dt\frac{dh}{h}\\
&=\int_0^\infty t^{p/q-1}\psi_n^*(t)^p \int_0^{t^{1/(n-1)}} h^{(1-\a) p}\frac{dh}{h}dt\\
&=((1-\a)p)^{-1}\int_0^\infty \psi_n^*(t)^p dt=c||D_nf||_p^p.
\end{aligned}
$$
This estimate holds for all $p\ge 1$ and $n\ge 2.$

Estimating $K_2(h)$, we first assume that $p=1$ and $n\ge 3.$
Set
$$
g(\widehat x_n)=\int_\R |f(x)|\,dx_n.
$$
Then $||g||_{L^1(\R^{n-1})}=||f||_{L^1(\R^n)}.$ Moreover, $g\in W_1^1(\R^{n-1})$ and
\begin{equation}\label{embed4}
||D_j g||_{L^1(\R^{n-1})}\le ||D_{j} f||_{L^1(\R^n)},\quad j=1,...,n-1.
\end{equation}
Indeed, since $f\in W_p^1(\R^n)$, then for any $j=1,...,n$ and almost all $\widehat x_j\in\R^{n-1}$ the function $f$ is locally absolutely continuous with respect to $x_j$ (see, e.g., \cite[2.1.4]{Zie}). Thus, we can apply Proposition \ref{absolute}.

We have
$$
\f_h(\widehat x_n)\le \int_\R |f(x)|dx_n+\int_\R |f(x+h e_n)|dx_n = 2g(\widehat x_n).
$$
Thus (see (\ref{embed100})),
$$
K_2(h)\le 2\int_0^{h^{n-1}} t^{1/q-1}g^*(t)dt
$$
and
$$
\begin{aligned}
J_2&= \int_0^\infty h^{-\a}K_2(h)\frac{dh}{h}\le 2\int_0^\infty  h^{-\a}\int_0^{h^{n-1}} t^{1/q-1}g^*(t)dt\frac{dh}{h}\\
&=2\int_0^\infty t^{1/q-1}g^*(t)\int_{t^{1/(n-1)}}^\infty h^{(1-1/q)(n-1)-1}\frac{dh}{h}\\
&=c\int_0^\infty t^{-1/(n-1)}g^*(t)dt=c||g||_{(n-1)',1}.
\end{aligned}
$$
Taking into account (\ref{embed4}) and applying inequality (\ref{embed0}), we get
\begin{equation*}
J_2\le c||g||_{(n-1)',1}\le c'\sum_{j=1}^{n-1}||D_{j}f||_1.
\end{equation*}
Together with the estimate $J_1\le c ||D_nf||_1$ obtained above, this gives (\ref{embed3}) for $p=1, n\ge 3.$

Let now $p>1,$ $n\ge 2.$  In what follows we write $x=(u,x_n),\, u=\widehat x_n\in \R^{n-1}.$

For a fixed $u\in \R^{n-1}$ and $t>0,$ denote by $Q_u(t)$ the cube in $\R^{n-1}$ centered at $u$ with the side length $(4t)^{1/(n-1)}.$
Let
$$
A_{u,t,h} =\{v\in Q_u(t): \f_h(v)\le \f_h^*(2t)\}.
$$
Then $\mes_{n-1}A_{u,t,h}\ge 2t.$ Thus, we have
$$
\f_h(u)-\f_h^*(2t)\le \f_h(u)-\frac{1}{\mes_{n-1}A_{u,t,h}}\int_{A_{u,t,h}}\f_h(v)dv
$$
\begin{equation}\label{nuevo}
\le \frac1{2t}\int_{Q_u(t)} |\f_h(u)-\f_h(v)|dv.
\end{equation}
Further,
$$
|\f_h(u)-\f_h(v)|=\left|\left(\int_\R|f(u,x_n+h)-f(u,x_n)|^pdx_n\right)^{1/p}\right.
$$
$$
-\left.\left(\int_\R|f(v,x_n+h)-f(v,x_n)|^pdx_n\right)^{1/p}\right|
$$
$$
\le 2\left(\int_\R|f(u,x_n)-f(v,x_n)|^pdx_n\right)^{1/p}.
$$
We have (see \cite[p. 143]{LL})
$$
|f(u,x_n)-f(v,x_n)|\le|u-v|\sum_{j=1}^{n-1}\int_0^1|D_jf(u+\tau(v-u), x_n)|d\tau.
$$
If $v\in Q_u(t),$ then $|u-v|\le \sqrt{n-1}(2t)^{1/(n-1)}.$ Thus, by the Minkowski inequality, for any $v\in Q_u(t)$
$$
\begin{aligned}
&|\f_h(u)-\f_h(v)|\\
&\le c t^{1/(n-1)}\sum_{j=1}^{n-1}\left(\int_\R\left(\int_0^1|D_jf(u+\tau(v-u), x_n)|d\tau\right)^pdx_n\right)^{1/p}\\
&\le ct^{1/(n-1)}\sum_{j=1}^{n-1}\int_0^1\left(\int_\R|D_jf(u+\tau(v-u), x_n)|^pdx_n\right)^{1/p}d\tau\\
&=ct^{1/(n-1)}\sum_{j=1}^{n-1}\int_0^1\psi_j(u+\tau(v-u))d\tau.
\end{aligned}
$$
From here and (\ref{nuevo}),
\begin{equation}\label{embed6}
\f_h(u)-\f_h^*(2t)\le ct^{1/(n-1)-1}\sum_{j=1}^{n-1}\int_{Q_0(t)}\int_0^1\psi_j(u+\tau z)d\tau dz.
\end{equation}
Taking into account that
$$
\f_h^*(t)\le \sup_{\mes_{n-1}E=t}\frac1t\int_E\f_h(u)du,
$$
and applying
 (\ref{embed6}), we get
 $$
 \f_h^*(t)-\f_h^*(2t)\le \sup_{\mes_{n-1}E=t}\frac1t\int_E[\f_h(u)-\f_h^*(2t)]du
$$
$$
\le  ct^{1/(n-1)-1}\sum_{j=1}^{n-1}\sup_{\mes_{n-1}E=t}\int_{Q_0(t)}\int_0^1\frac1t\int_E\psi_j(u+\tau z)dud\tau dz.
$$ 
Let $E\subset \R^{n-1}$, $\mes_{n-1} E=t$. Then for all $\tau\in [0,1]$ and $z\in Q_0(t)$
$$
\frac1t\int_E \psi_j(u+\tau z)du\le \psi^{**}_j(t).
$$
Thus, we have that
\begin{equation}\label{embed7}
\f_h^*(t)-\f_h^*(2t)\le ct^{1/(n-1)}\sum_{j=1}^{n-1}\psi^{**}_j(t).
\end{equation}

Now, for any $\e>0,$ we have
$$
J_2(\e)^{1/p}= \left(\int_\e^{1/\e} h^{-\a p}\int_{\e^{n-1}}^{h^{n-1}}t^{p/q-1}\f_h^*(t)^pdt\frac{dh}{h}\right)^{1/p}
$$
$$
\le \left(\int_0^\infty h^{-\a p}\int_0^{h^{n-1}}t^{p/q-1}[\f_h^*(t)-\f_h^*(2t)]^pdt\frac{dh}{h}\right)^{1/p}
$$
$$
+\left(\int_\e^{1/\e} h^{-\a p}\int_{\e^{n-1}}^{h^{n-1}}t^{p/q-1}\f_h^*(2t)^pdt\frac{dh}{h}\right)^{1/p}\equiv I'+I''(\e).
$$
By (\ref{embed7}) and (\ref{bound}),
$$
I'\le c\sum_{j=1}^{n-1}\left(\int_0^\infty h^{-\a p}\int_0^{h^{n-1}}t^{p/q+p/(n-1)-1}\psi^{**}_j(t)^pdt\frac{dh}{h}\right)^{1/p}
$$
$$
=c\sum_{j=1}^{n-1}\left(\int_0^\infty t^{p/q+p/(n-1)-1}\psi^{**}_j(t)^p\int_{t^{1/(n-1)}}^\infty h^{-\a p}\frac{dh}{h}dt\right)^{1/p}
$$
$$
=c'\sum_{j=1}^{n-1}\left(\int_0^\infty\psi^{**}_j(t)^p dt\right)^{1/p}\le c''\sum_{j=1}^{n-1}||\psi_j||_p=c''\sum_{j=1}^{n-1}||D_jf||_p.
$$
Further,
$$
I''(\e)=\left(2^{-p/q}\int_\e^{1/\e} h^{-\a p}\int_{2\e^{n-1}}^{2h^{n-1}}t^{p/q-1}\f_h^*(t)^pdt\frac{dh}{h}\right)^{1/p}
$$
$$
\le 2^{-1/q}\left(\int_\e^{1/\e} h^{-\a p}\int_{\e^{n-1}}^{h^{n-1}}t^{p/q-1}\f_h^*(t)^pdt\frac{dh}{h}\right)^{1/p}
$$
$$
+2^{-1/q}\left(\int_0^\infty h^{-\a p}\int_{h^{n-1}}^\infty t^{p/q-1}\f_h^*(t)^pdt\frac{dh}{h}\right)^{1/p}
$$
$$
=2^{-1/q}\left(J_2(\e)^{1/p}+J_1^{1/p}\right).
$$
As we have proved above, $J_1^{1/p}\le c ||D_nf||_p$. Thus,
$$
J_2(\e)^{1/p}\le I'+I''(\e)\le 2^{-1/q}J_2(\e)^{1/p}+ c\sum_{j=1}^{n}||D_jf||_p
$$
and
$$
J_2(\e)^{1/p}\le c'\sum_{j=1}^{n}||D_jf||_p.
$$
This implies that
$$
J_2^{1/p}=\left(\int_0^\infty h^{-\a p}\int_0^{h^{n-1}}t^{p/q-1}\f_h^*(t)^pdt\frac{dh}{h}\right)^{1/p}\le c'\sum_{j=1}^{n}||D_jf||_p.
$$
Thus, we have (see notations (\ref{embed5}) and (\ref{embed100}))
$$
J^{1/p}\le J_1^{1/p}+J_2^{1/p}\le c''\sum_{j=1}^{n}||D_jf||_p.
$$
In turn, this yields (\ref{embed3}) for $p>1,\,n\ge 2.$
\end{proof}

\begin{rem} By Proposition \ref{Strong}, inequality (\ref{embed3}) gives a refinement of (\ref{embed1}).

We stress that (\ref{embed3})  is true  for $p>1, n=2$. As it was already observed,  {\it we do not know whether this inequality  is true for $p=1, \,n=2.$} However, we shall show that similar  inequality holds for $p=1, \,n=2$ if we replace
the $L^1-$norms of derivatives  by the Hardy $H^1-$norms.
\end{rem}

\vskip 6pt

\vskip 6pt

\section{ Embeddings of  Hardy-Sobolev spaces}

\vskip 6pt

For a function $f\in L^1(\R^n)$ the Fourier transform
is defined by
$$
\widehat{f}(\xi)=\int_{\R^n}f(x)e^{-i2\pi x\cdot \xi}\,dx, \quad
\xi\in \R^n.
$$

Let $f\in L^1(\R^n).$ The Riesz transforms $R_jf\,\, (j=1,...,n)$ of $f$ are defined by the equality
$$
R_jf(x)=\lim_{\e\to 0+} c_n\int_{|y|\ge \e} \frac{y_j}{|y|^{n+1}}f(x-y)dy,\quad  c_n=\frac{\Gamma((n+1)/2)}{\pi^{(n+1)/2}}.
$$

The space $H^1(\R^n)$ is  the class of all functions $f\in L^1(\R^n)$ such that $R_jf\in L^1(\R^n)$ $(j=1,...,n)$.
The $H^1-$norm is defined by
$$
||f||_{H^1}=||f||_1+\sum_{j=1}^n||R_jf||_1
$$
(see \cite[p. 144]{FS}, \cite[Ch. III.4]{GR}).

If $f\in H^1(\R^n)$, then we have  (see \cite[p. 197]{GR})
$$
(R_jf)^\land(\xi)=-\frac{i\xi_j}{|\xi|}\widehat f(\xi).
$$

Let $P_t$ be the Poisson kernel in $\R^n$. We consider $n+1$ harmonic functions in $\R^{n+1}_+=\R^n\times (0,+\infty)$
\begin{equation}\label{harm1}
u_0(x,t)=(P_t\ast f)(x), \,\, u_j(x,t)=(P_t\ast R_jf)(x) \,\,(j=1,...,n).
\end{equation}
These functions satisfy the equations of conjugacy
\begin{equation}\label{conj}
\frac{\partial u_j}{\partial x_k}=\frac{\partial u_k}{\partial x_j}\quad(0\le j,k\le n), \quad
\sum_{j=0}^n\frac{\partial u_j}{\partial x_j}=0 \quad (x_0=t)
\end{equation}
(see \cite[Ch. III.4]{GR}).

For any $x\in \R^n$, denote by $\Gamma(x)$ the cone
$$
\Gamma(x)=\{(y,t)\in \R^{n+1}_+: |x-y|\le t\}.
$$

Let $f\in L^1(\R^n)$. The non-tangential maximal function $Nf$ is defined by
$$
Nf(x)=\sup_{(y,t)\in \Gamma(x)} |(P_t\ast f)(y)|.
$$
A function $f\in L^1(\R^n)$ belongs to $H^1(\R^n)$ if and only if $Nf\in L^1(\R^n)$.
In this case
\begin{equation}\label{max}
c||f||_{H^1}\le ||Nf||_1\le c'||f||_{H^1}\quad (c>0)
\end{equation}
(see \cite[Ch. III.4]{GR}, \cite[Th. 6.7.4]{Graf}).

The non-tangential maximal function $Nf$ is controlled by the vertical maximal function
$$
N_vf(x)=\sup_{t>0}|(P_t\ast f)(x)|.
$$
Namely, $Nf\in L^1(\R^n)$ if and only if $N_vf\in L^1(\R^n)$, and in this case
\begin{equation}\label{vert}
||N_vf||_1\le ||Nf||_1\le c||N_vf||_1
\end{equation}
(see  \cite[p.170]{FS}, \cite[Th. 6.4.4]{Graf}).

Furthermore, if $f\in H^1(\R^n)$, then
\begin{equation}\label{vert2}
\sum_{j=0}^n||N_vf_j||_1\le c||f||_{H^1},
\end{equation}
where $f_0=f,$ $f_j=R_jf\,\,\,(j=1,...,n)$ (see \cite[Ch. VII.3.2]{St})).

Inequalities (\ref{max}) -- (\ref{vert2}) imply that for any $f\in H^1(\R^n)$ its Riesz transforms $R_jf$ ($j=1,...,n)$ belong to $H^1(\R^n)$ and
\begin{equation}\label{riesz}
||R_j f||_{H^1}\le c ||f||_{H^1}\quad(j=1,...,n)
\end{equation}
(see also \cite[pp.  288,  322]{GR}).

Denote by $HW_1^1(\R^n)$ the space of all functions $f\in H^1(\R^n)$ for which all weak partial derivatives
$D_jf$ exist and belong to $H^1(\R^n)$.

\begin{lem}\label{Supp} Let $f\in HW_1^1(\R^n)$ and let $u(x,t)=(P_t\ast f)(x),$ $t>0$. Set
\begin{equation}\label{sup1}
\widetilde{N}f(x)= \sup_{(y,t)\in \Gamma(x)}\left|\frac{\partial u}{\partial{t}}(y,t)\right|.
\end{equation}
Then
\begin{equation}\label{sup0}
\widetilde{N}f(x)\le \sum_{j=1}^n N(R_j(D_jf))(x)
\end{equation}
and
\begin{equation}\label{sup2}
||\widetilde{N}f||_1\le c\sum_{j=1}^n||D_jf||_{H^1}.
\end{equation}
\end{lem}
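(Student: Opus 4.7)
The plan is to exploit the conjugacy (Cauchy--Riemann) equations in (\ref{conj}) to convert $\partial u/\partial t$ into a sum of horizontal derivatives, and then recognize each horizontal derivative as the Poisson extension of a function of the form $R_j(D_j f)$.

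Concretely, let $u_0=u=P_t\ast f$ and $u_j=P_t\ast R_j f$ for $j=1,\dots,n$. Since $f\in H^1(\R^n)$, the functions $u_0,\dots,u_n$ are harmonic in $\R^{n+1}_+$ and satisfy the system (\ref{conj}); in particular
\[
\frac{\partial u}{\partial t}(y,t)=-\sum_{j=1}^{n}\frac{\partial u_j}{\partial x_j}(y,t).
\]
Next I would identify $\partial u_j/\partial x_j$. Since Riesz transforms are Fourier multipliers with symbol $-i\xi_j/|\xi|$ and weak differentiation corresponds to multiplication by $2\pi i\xi_j$, the two operators commute on $H^1$; together with the hypothesis $D_j f\in H^1(\R^n)$, this gives $D_j(R_j f)=R_j(D_j f)$ in the distributional sense. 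Interchanging the derivative with the Poisson convolution (justified because $R_j f, D_j(R_j f)\in L^1$ and $P_t$ is a smooth integrable kernel), one obtains
\[
\frac{\partial u_j}{\partial x_j}(y,t)=\bigl(P_t\ast R_j(D_j f)\bigr)(y).
\]
Combining these displays yields the pointwise bound
\[
\left|\frac{\partial u}{\partial t}(y,t)\right|\le\sum_{j=1}^{n}\bigl|\bigl(P_t\ast R_j(D_j f)\bigr)(y)\bigr|,
\]
and taking supremum over $(y,t)\in\Gamma(x)$ produces (\ref{sup0}).

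For (\ref{sup2}), I would simply integrate (\ref{sup0}) over $\R^n$, apply the characterization (\ref{max}) of the $H^1$-norm via the non-tangential maximal function, and invoke the boundedness (\ref{riesz}) of the Riesz transforms on $H^1$:
\[
\|\widetilde{N}f\|_1\le\sum_{j=1}^{n}\|N(R_j(D_j f))\|_1\le c\sum_{j=1}^{n}\|R_j(D_j f)\|_{H^1}\le c'\sum_{j=1}^{n}\|D_j f\|_{H^1}.
\]

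The only genuinely delicate point is the identity $D_j(R_j f)=R_j(D_j f)$ and the associated commutation with $P_t$; this is standard once one works on the Fourier side (both sides have symbol $2\pi\xi_j^2/|\xi|$ applied to $\widehat f$) or via an approximation of $f$ by Schwartz functions, but it is the step that makes essential use of the hypothesis $D_j f\in H^1$ rather than merely $D_j f\in L^1$. Everything else is a direct application of the conjugate harmonic system (\ref{conj}) together with the maximal-function and Riesz-transform bounds already recorded in the section.
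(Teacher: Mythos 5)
Your proof is correct and follows essentially the same route as the paper: both use the conjugate harmonic system to write $\partial u/\partial t=-\sum_j\partial u_j/\partial x_j$, identify $\partial u_j/\partial x_j=P_t\ast R_j(D_jf)$ on the Fourier side, take non-tangential suprema to get (\ref{sup0}), and then conclude (\ref{sup2}) from the maximal-function characterization of $H^1$ together with the $H^1$-boundedness of the Riesz transforms. The paper justifies the key identity by writing out the Poisson--Fourier integral explicitly and differentiating under the integral sign (with convergence of $\int|\xi||\widehat f(\xi)|e^{-2\pi|\xi|t}\,d\xi$ as the justification), which is the same multiplier computation you describe more operator-theoretically.
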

\begin{proof} Let $u_j(x,t)=P_t\ast(R_jf)(x)$ $(j=1,...,n)$. By the Fourier inversion,
$$
u_j(x,t)=-\int_{\R^n}\widehat f(\xi)\frac{i\xi_j}{|\xi|}e^{2\pi i\xi\cdot x}e^{-2\pi|\xi|t}d\xi.
$$
 Further,
$$
\frac{\partial u_j}{\partial{x_j}}(x,t)=-\int_{\R^n}2\pi i\xi_j\widehat f(\xi)\frac{i\xi_j}{|\xi|}e^{2\pi i\xi\cdot x}e^{-2\pi|\xi|t}d\xi.
$$
Indeed, differentiation under the integral sign is justified by the convergence of the integral
$$
\int_{\R^n}|\xi||\widehat f(\xi)|e^{-2\pi|\xi|t}d\xi, \quad t>0.
$$
Thus,
\begin{equation}\label{sup4}
\frac{\partial u_j}{\partial{x_j}}(x,t)=(P_t\ast (R_j(D_jf)))(x) \quad(j=1,...,n).
\end{equation}
By (\ref{conj}),
\begin{equation}\label{sup3}
\left|\frac{\partial u}{\partial{t}}(x,t)\right|\le \sum_{j=1}^n \left|\frac{\partial u_j}{\partial{x_j}}(x,t)\right|.
\end{equation}
Applying (\ref{sup3}) and (\ref{sup4}), we get (\ref{sup0}). By (\ref{max}) and (\ref{riesz}),
this implies (\ref{sup2}).
\end{proof}

As it was mentioned above, the following theorem holds.
\begin{teo}\label{Hardy1} Assume that  $f\in HW_1^1(\R^n)\,\,(n\in\N)$ and $1<q<n'.$  Then
$$
\sum_{k=1}^n\int_0^\infty h^{n/q'-1}||\Delta_k(h)f||_{q}\frac{dh}{h}\le c \sum_{k=1}^n||D_kf||_{H^1}.
$$
\end{teo}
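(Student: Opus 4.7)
My plan is to split the proof according to dimension. For $n \ge 2$ the theorem follows at once from Theorem \ref{1988} applied with $p = 1$: the exponent $s = 1 - n/q' > 0$ (which is equivalent to $q < n'$) is precisely the one needed, and that theorem gives
\[
\sum_{k=1}^n \int_0^\infty h^{n/q'-1}||\Delta_k(h)f||_{q,1}\frac{dh}{h} \le c\sum_{k=1}^n ||D_k f||_1.
\]
Replacing $||\cdot||_{q,1}$ by the weaker $||\cdot||_q$ on the left (since $L^{q,1}\subset L^q$) and $||D_k f||_1$ by the larger $||D_k f||_{H^1}$ on the right (since $H^1 \subset L^1$) yields the claim.

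The case $n = 1$, in which Theorem \ref{1988} is unavailable, is essentially the result of Oswald (\cite{Oswald}, see also \cite{K88}), and I would handle it via the atomic decomposition of $H^1(\R)$. Write $f' = \sum_j \lambda_j a_j$ with $\sum_j |\lambda_j| \le c||f'||_{H^1}$ and each $a_j$ a one-atom supported in an interval $I_j$ of length $r_j$, with $||a_j||_\infty \le r_j^{-1}$ and $\int a_j = 0$. The primitive $A_j(x) = \int_{-\infty}^x a_j(s)\,ds$ is supported in $I_j$ (thanks to the cancellation $\int a_j = 0$) and satisfies $||A_j||_\infty \le 1$, so $||a_j||_q \le r_j^{1/q - 1}$ and $||A_j||_q \le r_j^{1/q}$. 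Using the two bounds
\[
||\Delta(h)A_j||_q \le h||a_j||_q \le h r_j^{1/q-1} \text{ for } h \le r_j, \qquad ||\Delta(h)A_j||_q \le 2||A_j||_q \le 2 r_j^{1/q} \text{ for } h \ge r_j,
\]
and integrating the two ranges separately yields
\[
\int_0^\infty h^{-1/q-1}||\Delta(h)A_j||_q\,dh \le C,
\]
with $C$ depending only on $q$. Since $\Delta(h)f = \sum_j \lambda_j \Delta(h)A_j$ (any additive constant that appears in passing from $f'$ to $f$ is annihilated by $\Delta(h)$), summation over $j$ produces the conclusion.

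The main obstacle is the endpoint nature of the $n = 1$ case. Naively combining the Poisson-extension pointwise bound $|\Delta(h)f(x)| \le h\, G_h(x)$ extractable from Lemma \ref{Supp} with the $L^\infty$ bound $||f||_\infty \le c||f'||_{H^1}$ (itself a consequence of the atomic decomposition) via H\"older interpolation produces only $||\Delta(h)f||_q \lesssim h^{1/q}$, which makes $\int_0^\infty h^{-1/q-1}||\Delta(h)f||_q\,dh$ logarithmically divergent. The key point of the atomic argument is that the cancellation $\int a_j = 0$ delivers the scale-invariant estimate $\int_0^\infty h^{-1/q-1}||\Delta(h)A_j||_q\,dh \le C$ independent of $r_j$, which supplies the extra power of $h$ needed for convergence.
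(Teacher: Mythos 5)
Your proposal is correct, and it is more explicit than the paper's own treatment of this theorem: the paper does not give a proof, it simply says ``for $n\ge 2$ this result follows from Theorem \ref{refined}; for $n=1$ it was proved in \cite{Oswald} (see also \cite{K88}).'' For $n\ge 2$ your deduction from Theorem \ref{1988} with $p=1$ (using $s=1-n/q'$, then weakening $\|\cdot\|_{q,1}$ to $\|\cdot\|_q$ and strengthening $\|\cdot\|_1$ to $\|\cdot\|_{H^1}$ on the other side) is exactly right --- and arguably cleaner than the paper's attribution, since Theorem \ref{refined} with $p=1$ is only stated for $n\ge 3$, whereas Theorem \ref{1988} covers $p=1$, $n\ge 2$, which is precisely what is needed for the $n=2$ case. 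For $n=1$, your atomic-decomposition argument is a sound and self-contained reconstruction of the Oswald--Kolyada proof: the cancellation $\int a_j=0$ makes $A_j$ compactly supported with $\|A_j\|_\infty\le 1$, the two-sided bound $\|\Delta(h)A_j\|_q\le\min(hr_j^{1/q-1},2r_j^{1/q})$ gives the scale-invariant estimate $\int_0^\infty h^{-1/q-1}\|\Delta(h)A_j\|_q\,dh\le C(q)$, and $\sum_j|\lambda_j|\lesssim\|f'\|_{H^1}$ closes the argument (with the interchange of sum and integral justified by Tonelli, as your integrand is nonnegative). Your closing remark correctly identifies why the cancellation is essential: without it the endpoint integral diverges logarithmically. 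No gaps.
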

For $n\ge 2$ this result follows from Theorem  \ref{refined}; for $n=1$ it was proved in \cite{Oswald} (see also \cite{K88}).

In this section we obtain a refinement of Theorem \ref{Hardy1} for $n\ge 2.$ For $1<q<\infty$ and $k=1,...,n$, denote by $V_{q,k}$
the mixed norm space
$L^{q,1}(\R^{n-1})[L^1(\R)]_k$
obtained by taking first the norm in $L^1(\R)$ with respect to the variable $x_k$, and then the norm in $L^{q,1}(\R^{n-1})$ with respect to $\widehat x_k.$
\begin{teo}\label{Hardy2} Assume that  $f\in HW_1^1(\R^n)\,\,(n\ge 2).$ Let $1<q<(n-1)'$ and $\a=1-(n-1)/q'.$  Then
\begin{equation}\label{hardy1}
\sum_{k=1}^n\int_0^\infty h^{-\a}||\Delta_k(h)f||_{V_{q,k}}\frac{dh}{h}\le c \sum_{k=1}^n||D_kf||_{H^1}.
\end{equation}
\end{teo}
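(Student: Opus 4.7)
The strategy splits by cases. For $n\geq 3$ the theorem is immediate from Theorem~\ref{refined}: since $H^1(\R^n)\subset L^1(\R^n)$, we have $\|D_jf\|_1\leq\|D_jf\|_{H^1}$, and the parameter $\alpha=1-(n-1)/q'$, the range $1<q<(n-1)/(n-2)$, and the mixed-norm space $V_{q,k}=L^{q,1}(\R^{n-1})[L^1(\R)]_k$ in the two statements coincide. The new case is $n=2$, which is precisely the case $p=1$, $n=2$ of Theorem~\ref{refined} left open in the paper; here the Hardy-space hypothesis on the derivatives must be exploited essentially.

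For $n=2$, by coordinate symmetry I fix $k=2$ and set $\alpha=1/q=1-1/q'$ together with
\[
\varphi_h(x_1)=\int_\R|\Delta_2(h)f(x_1,x_2)|\,dx_2,\qquad \psi_j(x_1)=\int_\R|D_jf(x_1,x_2)|\,dx_2.
\]
The quantity to estimate is $J=\int_0^\infty h^{-\alpha}\|\varphi_h\|_{q,1}\,dh/h$ with $\|\varphi_h\|_{q,1}=\int_0^\infty t^{1/q-1}\varphi_h^*(t)\,dt$. Following the scheme of Theorem~\ref{refined}, I split this at $t=h$ (the $n=2$ analog of $t=h^{n-1}$) as $\|\varphi_h\|_{q,1}=K_1(h)+K_2(h)$ and correspondingly write $J=J_1+J_2$.

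The bound for $J_1$ is identical to that in the proof of Theorem~\ref{refined}: the pointwise inequality $\varphi_h\leq h\psi_2$, Fubini, and the identity $1/q-\alpha=0$ give $J_1\leq c\|\psi_2\|_1=c\|D_2f\|_1\leq c\|D_2f\|_{H^1}$.

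For $J_2$ I would mimic the telescoping/self-improvement step of the $p>1$ argument in Theorem~\ref{refined}: start from the rearrangement
\[
\varphi_h(u)-\varphi_h^*(2t)\leq \frac{1}{t}\int_{[u-2t,u+2t]}|\varphi_h(u)-\varphi_h(v)|\,dv
\]
and aim at an analog of (\ref{embed7}). The straightforward pointwise bound $|\varphi_h(u)-\varphi_h(v)|\leq 2|u-v|\int_0^1\psi_1(u+\tau(v-u))\,d\tau$ leads after averaging to $\psi_1^{**}$ on the right, which for $p=1$ is not in $L^1$. To bypass this, I would use the Poisson integral $u_P(x,s)=(P_s*f)(x)$ and the decomposition
\[
f(x)-f(y)=\bigl[f(x)-u_P(x,s)\bigr]+\bigl[u_P(x,s)-u_P(y,s)\bigr]+\bigl[u_P(y,s)-f(y)\bigr],
\]
applied with $x=(u,x_2)$, $y=(v,x_2)$, and a suitable height $s\sim|u-v|$. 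By Lemma~\ref{Supp}, $|f-u_P(\cdot,s)|\leq s\widetilde Nf$; by (\ref{sup4}) with $j=1$, $|\partial_{x_1}u_P(\cdot,s)|\leq N_v(R_1D_1f)$. These produce a pointwise bound for $|\varphi_h(u)-\varphi_h(v)|$ in terms of $|u-v|$ times slices of $\widetilde Nf$ and $N_v(R_1D_1f)$, whose $L^1(\R^2)$ norms are controlled by $\sum_j\|D_jf\|_{H^1}$ thanks to Lemma~\ref{Supp}, (\ref{max}), (\ref{vert})--(\ref{vert2}), and (\ref{riesz}). Inserting the resulting inequality into the rearrangement and then running the self-improvement argument $I''(\varepsilon)\leq 2^{-1/q}(J_2(\varepsilon)+J_1)$ as in Theorem~\ref{refined}, with $\varepsilon\to 0$ at the end, should yield $J_2\leq c\sum_j\|D_jf\|_{H^1}$.

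The main obstacle is engineering the right replacement for $\psi_1^{**}$: the substitute maximal function, built from $\widetilde Nf$ and $N_v(R_jD_jf)$ via slice integrations in $x_2$, must be integrable on $\R^+$ when inserted with the correct $h$- and $t$-weights into the subsequent integration, and the Poisson height $s$ has to be chosen so that all error terms still cancel in the self-improvement step. This is the only point where the proof must depart substantially from the proof of Theorem~\ref{refined}, and it is where the hypothesis $D_jf\in H^1(\R^n)$ — rather than merely $D_jf\in L^1(\R^n)$ — is essential.
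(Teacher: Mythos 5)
Your plan matches the paper's overall architecture for the new case $n=2$: split $J=J_1+J_2$ at scale $h$, bound $J_1$ by $\|D_2f\|_1$ using $\f_h^*\le h g^*$, attack $J_2$ through the Poisson extension and nontangential maximal functions of $D_jf$ and $R_jD_jf$, and close by self-improvement. The vertical--horizontal--vertical Poisson path you describe, at height $s\sim|u-v|$, is also morally the same as the paper's two-diagonal-cone argument giving $|f(x+2\tau,y)-f(x,y)|\le\tau\bigl[N(D_1f)+\widetilde Nf\bigr]$ evaluated at the two endpoints. But as written your proposal has one small inaccuracy and one genuine gap, the latter being precisely what you call ``the main obstacle'' and do not resolve.

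The small inaccuracy: the horizontal derivative of the Poisson integral of $f$ is $\partial_{x_1}(P_t*f)=P_t*(D_1f)$, so it is controlled by $N(D_1f)$, \emph{not} by $N_v(R_1D_1f)$ via (\ref{sup4}); equation (\ref{sup4}) concerns $\partial_{x_j}u_j$ with $u_j=P_t*R_jf$, not $u_0=P_t*f$. The Riesz transforms of the $D_jf$ enter only through the \emph{vertical} derivative, via (\ref{sup0}).

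The genuine gap: you start from
\begin{equation*}
\f_h(u)-\f_h^*(2t)\le\frac1t\int_{[u-2t,u+2t]}|\f_h(u)-\f_h(v)|\,dv
\end{equation*}
and then insert a bound $|\f_h(u)-\f_h(v)|\lesssim|u-v|\cdot(\text{maximal functions})$. Averaging over $v$ and then passing to the rearrangement inevitably produces $\Psi^{**}(t)$ on the right, where $\Psi$ is the $x_2$-integrated trace of the relevant nontangential maximal functions. But $\Psi\in L^1$ gives only $\int_0^\infty\Psi^*(s)\,ds<\infty$, while $\int_0^\infty\Psi^{**}(s)\,ds$ diverges in general; the argument thus fails at exactly the point you flag, and no choice of Poisson height cures it. The paper's resolution is \emph{not} to average over $v$, but to select, for each $x$ and $s$, a single comparison point $x+2\tau$ with $\tau=\tau(x,s)\in(0,2s)$ that simultaneously satisfies $\f_h(x+2\tau)\le\f_h^*(2s)$ and $\Psi(x+2\tau)\le\Psi^*(s)$; such a $\tau$ exists because the bad set inside $(0,4s)$ where either inequality fails has measure at most $3s$. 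With this $\tau$ and $\tau\le2s$ one gets
\begin{equation*}
\f_h(x)-\f_h^*(2s)\le 4s\bigl(\Psi(x)+\Psi^*(s)\bigr),
\end{equation*}
and since $\mes\{\Psi>\Psi^*(s)\}\le s$, the set $\{\f_h>\f_h^*(2s)+8s\Psi^*(s)\}$ has measure at most $s$, whence
\begin{equation*}
\f_h^*(s)-\f_h^*(2s)\le 8s\,\Psi^*(s),
\end{equation*}
with a \emph{single} star. This measure-theoretic selection is the step your sketch is missing; it is what makes the $H^1$-hypothesis on the derivatives yield an integrable right-hand side, and without it the self-improvement step has nothing valid to iterate on.
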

\begin{proof} For $n\ge 3$ (\ref{hardy1}) follows from the stronger inequality (\ref{embed3}). We assume that $n=2.$ Set
$$
\f_h(x)=\int_\R|f(x,y+h)-f(x,y)|dy,\quad h>0.
$$
We consider the integral
\begin{equation}\label{hardy0}
J=\int_0^\infty h^{-1/q-1}\int_0^\infty s^{1/q-1}\f_h^*(s)dsdh.
\end{equation}
 We have
$$
\begin{aligned}
&J= \int_0^\infty h^{-1/q-1}\int_h^\infty s^{1/q-1}\f_h^*(s)dsdh\\
&+ \int_0^\infty h^{-1/q-1}\int_0^h s^{1/q-1}\f_h^*(s)dsdh\equiv J_1+J_2.
\end{aligned}
$$
As in Theorem \ref{refined}, we have the estimate
\begin{equation}\label{hardy2}
\f_h^*(s)\le hg^*(s), \quad \mbox{where}\quad g(x)=\int_\R|D_2f(x,y)|dy.
\end{equation}
Applying this estimate, we immediately get that
\begin{equation}\label{hardy3}
J_1\le c ||D_2f||_1.
\end{equation}

 Further, for the simplicity, we may assume that $J_2<\infty$ (otherwise we can apply the same  arguments as ones given  at the final part of the proof of Theorem \ref{refined} for estimation of $J_2$).
We first consider the difference $\f_h^*(s)-\f_h^*(2s).$
Denote
$$
\psi(x)=\int_\R N(D_1f)(x,y)dy,
$$
$$
 \psi_1(x)=\int_\R N(R_1(D_1f))(x,y)dy, \quad \psi_2(x)=\int_\R N(R_2(D_2f))(x,y)dy,
$$
and
$\Psi=\psi+\psi_1+\psi_2.$

Let $x\in \R$ and $s>0.$ There exists $\tau=\tau(x,s)\in (0,2s)$ such that
\begin{equation}\label{hardy4}
\f_h(x+2\tau)\le \f_h^*(2s) \quad\mbox{and}\quad \Psi(x+2\tau)\le \Psi^*(s).
\end{equation}
Indeed, let $A$ be the set of all $u\in (0, 4s)$ such that at least one of the inequalities
\begin{equation}\label{obratnye}
\f_h(x+u)> \f_h^*(2s) \quad\mbox{or}\quad \Psi(x+u)> \Psi^*(s)
\end{equation}
holds. Then $\mes_1 A\le 3s$ and therefore there exists $u\in (0, 4s)$ for which both the inequalities (\ref{obratnye}) fail.

Further, we have
$$
\f_h(x)-\f_h^*(2s)\le \f_h(x)-\f_h(x+2\tau)
$$
\begin{equation}\label{hardy6}
\le 2 \int_\R|f(x+2\tau,y)-f(x,y)|dy.
\end{equation}
For fixed $x,$  $y$, and $s$,  consider the cones
$$
\Gamma_1=\Gamma(x,y)\quad\mbox{and}\quad \Gamma_2=\Gamma(x+2\tau,y).
$$
The point $(x+\tau,y,\tau)$ belongs to  both of them. Let $u=P_t\ast f.$ Then
$$
\begin{aligned}
&|f(x+2\tau,y)-f(x,y)|\\
&\le |f(x,y)-u(x+\tau,y,\tau)|+|f(x+2\tau,y)-u(x+\tau,y,\tau)|\\
&\le \int_0^\tau\left(\left|\frac{\partial u}{\partial x}(x+t,y,t)\right|+\left|\frac{\partial u}{\partial t}(x+t,y,t)\right|\right)dt\\
&+\int_0^\tau\left(\left|\frac{\partial u}{\partial x}(x+\tau+s,y,\tau-s)\right|+\left|\frac{\partial u}{\partial t}(x+\tau+s,y,\tau-s)\right|\right)ds\\
&\le \tau\sup_{(x',y',t)\in \Gamma_1}\left(\left|\frac{\partial u}{\partial x}(x',y',t)\right|+\left|\frac{\partial u}{\partial t}(x',y',t)\right|\right)\\
&+\tau\sup_{(x',y',t)\in \Gamma_2}\left(\left|\frac{\partial u}{\partial x}(x',y',t)\right|+\left|\frac{\partial u}{\partial t}(x',y',t)\right|\right)\\
&\le\tau\left[N(D_1f)(x,y)+N(D_1f)(x+2\tau,y)+\widetilde{N}f(x,y)+\widetilde{N}f(x+2\tau,y)\right]
\end{aligned}
$$
(we have used the notation (\ref{sup1})). Applying  (\ref{sup0}), we have
$$
\begin{aligned}
&|f(x+2\tau,y)-f(x,y)|\le \tau\left(N(D_1f)(x,y) + N(D_1f)(x+2\tau,y)\right.\\
&\left.+N(R_1(D_1f))(x,y)+N(R_1(D_1f))(x+2\tau,y)\right.\\
&\left.+N(R_2(D_2f))(x,y)+N(R_2(D_2f))(x+2\tau,y)\right).
\end{aligned}
$$
By (\ref{hardy6}), this implies that
$$
\f_h(x)-\f_h^*(2s)
\le 2\tau(\Psi(x)+\Psi(x+2\tau)),
$$
where $\Psi=\psi+\psi_1+\psi_2.$ Taking into account (\ref{hardy4}), we obtain
$$
\f_h^*(s)-\f_h^*(2s)\le 8s\Psi^*(s).
$$
From here
$$
J_2'= \int_0^\infty h^{-1/q-1}\int_0^h s^{1/q-1}[\f_h^*(s)-\f_h^*(2s)]dsdh
$$
$$
\le 8\int_0^\infty h^{-1/q-1}\int_0^h s^{1/q}\Psi^*(s)dsdh=8q\int_0^\infty \Psi^*(s)ds=8q||\Psi||_1.
$$
Applying (\ref{max}) and (\ref{riesz}), we get
$$
\begin{aligned}
||\Psi||_1&=||N(D_1f)||_1+||N(R_1(D_1f))||_1+||N(R_2(D_2f))||_1\\
&\le c(||D_1f||_{H^1}+||D_2f||_{H^1}).
\end{aligned}
$$
Thus,
\begin{equation}\label{hardy7}
J_2'\le c'(||D_1f||_{H^1}+||D_2f||_{H^1}).
\end{equation}
Further, we consider
$$
J_2''=\int_0^\infty h^{-1/q-1}\int_0^h s^{1/q-1}\f_h^*(2s)dsdh.
$$
We have (see (\ref{hardy0}))
\begin{equation}\label{hardy8}
J_2''= 2^{-1/q}\int_0^\infty h^{-1/q-1}\int_0^{2h} s^{1/q-1}\f_h^*(s)dsdh
\le 2^{-1/q}J.
\end{equation}
Using estimates (\ref{hardy3}), (\ref{hardy7}), and (\ref{hardy8}),  we obtain
\begin{equation}\label{hardy100}
J\le 2^{-1/q}J + c(||D_1f||_{H^1}+||D_2f||_{H^1}).
\end{equation}
We assumed that $J_2<\infty$ and hence $J=J_1+J_2<\infty.$ Thus, (\ref{hardy100})  implies (\ref{hardy1}) for $n=2.$

\end{proof}

\vskip 6pt
\section{Estimates of Fourier transforms}

By Hardy's inequality, for any $f\in H^1(\R^n)$ $(n\in\N)$
\begin{equation}\label{H_ineq}
\int_{\R^n} \frac{|\widehat{f}(\xi)|}{|\xi|^n}\,d\xi\le c||f||_{H^1}.
\end{equation}

\vskip 4pt

 It was first discovered by Bourgain \cite{Bour1}  that for
$n\ge 2$ the Fourier transforms of the derivatives of
functions in the Sobolev space $W_1^1(\R^n)$ satisfy Hardy's
inequality. More exactly, Bourgain considered the periodic case.
His studies were continued by Pe\l czy\'nski and  Wojciechowski
\cite{PeWo}. The following theorem holds (Bourgain; Pe\l czy\'nski and  Wojciechowski).

\vskip 4pt
\begin{teo}\label{Pelcz} Let $f\in W_1^1(\R^n)$ $(n\ge 2).$ Then
\begin{equation}\label{pelcz}
\int_{\R^n} |\widehat{f}(\xi)||\xi|^{1-n}\,d\xi\le c
||\nabla f||_1.
\end{equation}
\end{teo}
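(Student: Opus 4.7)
The plan is to deduce (\ref{pelcz}) from Theorem \ref{Obertype100} for $n \ge 3$ by a simple polar-coordinate argument, and to treat the case $n = 2$ by a separate classical argument, since the refinement (\ref{obertype100}) is left open precisely in that dimension.

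For $n \ge 3$, I would first pass to polar coordinates:
$$\int_{\R^n} |\widehat f(\xi)|\,|\xi|^{1-n}\, d\xi = \int_0^\infty r^{1-n} F(r)\, dr, \qquad F(r) := \int_{S_r} |\widehat f(\xi)|\, d\s(\xi),$$
and split the radial integral dyadically. Since $1 - n \le 0$, one has $r^{1-n} \le 2^{k(1-n)}$ on the interval $[2^k, 2^{k+1}]$, and therefore
$$\int_0^\infty r^{1-n} F(r)\, dr \le \sum_{k \in \Z} 2^{k(1-n)} \int_{2^k}^{2^{k+1}} F(r)\, dr \le \sum_{k \in \Z} 2^{k(2-n)} \sup_{2^k \le r \le 2^{k+1}} F(r).$$
The right-hand side is controlled by $c \| \nabla f \|_1$ by Theorem \ref{Obertype100}, whose proof in turn rests on the mixed-norm embedding (\ref{embed32}) of Theorem \ref{refined1}. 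This yields (\ref{pelcz}) for $n \ge 3$ at essentially no additional cost.

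For $n = 2$, the polar-coordinate reduction is still available, but Theorem \ref{Obertype100} is not, so one must invoke an independent argument. I would fall back on the original proof of Bourgain or of Pe\l czy\'nski and Wojciechowski, which uses tools outside the embedding framework of the present paper: atomic decomposition of $H^1(\R^2)$, the divergence-free structure implicit in $\nabla f$, and Riesz-transform characterizations of $H^1$. Equivalently, one may argue by duality, reducing (\ref{pelcz}) to a boundedness statement on $L^\infty(\R^2)$ for a convolution operator whose kernel is of Calder\'on-Zygmund type and is built from $\xi/|\xi|^{n+1}$.

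The main obstacle is precisely the case $n = 2$, which is the same dimensional threshold at which the proofs of Theorems \ref{refined1} and \ref{Obertype100} break down. A unified, embedding-based proof of (\ref{pelcz}) across all $n \ge 2$ would require settling the open problem highlighted by the author after Theorem \ref{refined1} and Theorem \ref{Obertype100}.
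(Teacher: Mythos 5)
The paper does not give a proof of Theorem~\ref{Pelcz}; it states it as a known result and attributes it to Bourgain~\cite{Bour1} and Pe\l czy\'nski--Wojciechowski~\cite{PeWo}. So there is no in-paper argument to compare against. What you have done for $n\ge 3$ is verify the author's remark --- made right after the statement of Theorem~\ref{Obertype100} and again, in the cube version, after Corollary~\ref{obertype3} --- that the new inequality refines~(\ref{pelcz}). Your polar-coordinate calculation is correct: since $r^{1-n}\le 2^{k(1-n)}$ for $r\in[2^k,2^{k+1}]$ and that interval has length $2^k$, the dyadic splitting gives
\begin{equation*}
\int_0^\infty r^{1-n}F(r)\,dr \;\le\; \sum_{k\in\Z}2^{k(2-n)}\sup_{2^k\le r\le 2^{k+1}}F(r),
\end{equation*}
which Theorem~\ref{Obertype100} bounds by $c\,\|\nabla f\|_1$. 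It is worth observing (and you should, to preempt an objection) that this deduction is not circular: Theorem~\ref{Obertype100} rests on Theorems~\ref{Sup} and~\ref{refined1}, whose proofs use the Sobolev--Lorentz inequality~(\ref{embed0}) but never invoke~(\ref{pelcz}).

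For $n=2$, your assessment is accurate: Theorem~\ref{Obertype100} is explicitly left open in that dimension, so the embedding machinery of this paper gives nothing, and one must fall back on the original arguments (atomic decomposition of $H^1$, the structure of $\nabla f$, Riesz transforms). That is also what the present paper does, for all $n\ge2$, by simply citing~\cite{Bour1} and~\cite{PeWo}. In short, your proposal is logically sound and actually more informative than the paper's treatment --- it shows how the $n\ge3$ case follows from the paper's main new result --- but it remains a derivation for $n\ge3$ plus an external citation for $n=2$; a unified embedding-based proof would require resolving the open problem the author flags.
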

\vskip 4pt

Equivalently,
\begin{equation}\label{pelcz1}
\sum _{k=1}^n\int_{\mathbb{R}^n}\frac{|(D_k
f)^\land(\xi)|}{|\xi|^n}d\xi \le c \sum _{k=1}^n\|D_k f\|_1.
\end{equation}
This is  Hardy type inequality. These results were extended in \cite{K1997}, \cite{K01}.

In contrast to (\ref{H_ineq}), inequalities  (\ref{pelcz}) and  (\ref{pelcz1}) fail to hold for $n=1.$

 Oberlin \cite{Ober}    proved the following refinement of Hardy's inequality (\ref{H_ineq}) valid for $n\ge 2$.

\begin{teo}\label{Oberlin} Let  $f\in H^1(\R^n)$ $(n\ge 2).$ Then
\begin{equation}\label{oberlin}
\sum_{k\in\Z} 2^{k(1-n)}\sup_{2^k\le r\le 2^{k+1}}\int_{S_r} |\widehat{f}(\xi)|\,d\s(\xi)\le c||f||_{H^1},
\end{equation}
where $S_r$ is the sphere of the radius $r$ centered at the origin in $\R^n$ and $d\s(\xi)$ is the canonical surface measure on $S_r.$
\end{teo}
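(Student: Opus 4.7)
The plan is to invoke the atomic decomposition of $H^1(\R^n)$: writing $f = \sum_j \lambda_j a_j$ with $\sum_j |\lambda_j| \le c\|f\|_{H^1}$, where each $a_j$ is an $H^1$-atom supported in a ball $B(x_j, R_j)$ satisfying $\|a_j\|_\infty \le c|B_j|^{-1}$ and $\int a_j = 0$. Since the left-hand side of (\ref{oberlin}) depends only on $|\widehat{a_j}(\xi)|$, which is unaffected by translating $a_j$, the triangle inequality reduces the theorem to proving the uniform bound
\begin{equation*}
\sum_{k\in\Z} 2^{k(1-n)} \sup_{2^k \le r \le 2^{k+1}} \int_{S_r} |\widehat{a}(\xi)|\,d\s(\xi) \le c
\end{equation*}
for every atom $a$ supported in $B(0, R)$, with constant independent of $R$.

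I would split the dyadic sum at the critical scale $2^k \sim 1/R$. In the low-frequency regime $2^k R \le 1$, the cancellation $\widehat{a}(0) = 0$ together with $|\nabla \widehat{a}(\xi)| \le 2\pi R\|a\|_1 \le 2\pi R$ gives the Taylor-type bound $|\widehat{a}(\xi)| \le 2\pi R|\xi|$; hence the spherical integral is at most $cRr^n$, and the resulting contribution sums to $cR\sum_{2^k R \le 1} 2^k \le c$. In the high-frequency regime $2^k R > 1$ cancellation no longer helps, so I would instead apply Cauchy--Schwarz on the sphere,
\begin{equation*}
\int_{S_r} |\widehat{a}(\xi)|\,d\s(\xi) \le |S_r|^{1/2}\biggl(\int_{S_r} |\widehat{a}(\xi)|^2\,d\s(\xi)\biggr)^{1/2},
\end{equation*}
and rewrite the $L^2$-norm squared as $\iint a(x)\overline{a(y)}\,\widehat{d\s_r}(x-y)\,dx\,dy$. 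Inserting the Bessel-type bound $|\widehat{d\s_r}(z)| \le c r^{n-1}(1+r|z|)^{-(n-1)/2}$ and using $\|a\|_\infty \le cR^{-n}$ with the support in $B(0,R)$ yields the key estimate $\int_{S_r}|\widehat{a}|^2\,d\s \le c(r/R)^{(n-1)/2}$. Combining, the dyadic term is bounded by $c(2^k R)^{-(n-1)/4}$, whose sum over $2^k R \ge 1$ is a convergent geometric series dominated by a constant.

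The main obstacle I anticipate is the high-frequency $L^2$-estimate on the sphere: one must split the double integral defining $\int_{S_r}|\widehat{a}|^2\,d\s$ into a near-diagonal piece $|x-y|\le 1/r$, where $|\widehat{d\s_r}|$ is taken as $cr^{n-1}$, and a far-diagonal piece $1/r < |x-y|\le 2R$, where the oscillatory decay $|\widehat{d\s_r}(z)| \le cr^{(n-1)/2}|z|^{-(n-1)/2}$ is applied; one then verifies that in the regime $rR>1$ the far-diagonal piece dominates and produces the claimed $(r/R)^{(n-1)/2}$ bound. Once this balancing is carried out, the remainder is routine geometric summation, and the translation invariance of $|\widehat{a}|$ justifies the initial reduction to atoms centered at the origin.
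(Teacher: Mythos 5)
This theorem is not proved in the paper at all: it is Oberlin's theorem, stated here only for motivation and cited from \cite{Ober}, while the paper goes on to prove a different (gradient) analogue for $W^1_1(\R^n)$ via Theorem~\ref{Sup}. So there is no ``paper's own proof'' to compare against; I can only assess your argument on its merits, and it is correct. The reduction to a single atom $a$ supported in $B(0,R)$ with $\|a\|_\infty\le c|B_R|^{-1}$, $\int a=0$ is legitimate (the $H^1$-atomic series converges in $L^1$, so $\widehat f=\sum\lambda_j\widehat{a_j}$ pointwise, the left-hand side is subadditive in $|\widehat f|$, and translating $a$ changes $\widehat a$ only by a unimodular phase). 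In the low-frequency range $2^kR\le1$ the Taylor bound $|\widehat a(\xi)|\le 2\pi R|\xi|$ (from $\widehat a(0)=0$, $|\nabla\widehat a|\le 2\pi R\|a\|_1\le 2\pi R$) gives $\int_{S_r}|\widehat a|\,d\s\le cRr^n$, and the resulting sum $cR\sum_{2^kR\le1}2^k\le c$ as claimed. In the high-frequency range, writing $\int_{S_r}|\widehat a|^2\,d\s=\iint a(x)\overline{a(y)}\,\widehat{d\s_r}(x-y)\,dx\,dy$ and inserting $|\widehat{d\s_r}(z)|\le cr^{n-1}(1+r|z|)^{-(n-1)/2}$ together with $\|a\|_\infty\le cR^{-n}$, the far-diagonal piece dominates when $rR>1$ and yields $\int_{S_r}|\widehat a|^2\,d\s\le c(r/R)^{(n-1)/2}$; Cauchy--Schwarz then gives a per-scale contribution $c(2^kR)^{-(n-1)/4}$, and the geometric sum over $2^kR\ge1$ is $O(1)$ for $n\ge2$. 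That exponent $-(n-1)/4$ vanishing at $n=1$ is a useful consistency check with the known failure of the estimate in one dimension. This is the standard restriction-type argument (atomic decomposition plus the oscillatory decay of $\widehat{d\s}$), essentially the mechanism in Oberlin's original paper; it is an entirely reasonable proof and I see no gap.
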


Inequality (\ref{oberlin}) was used in \cite{Ober} to obtain the description of radial Fourier multipliers for $H^1(\R^n)$ $(n\ge 2).$
Observe that these results fail for $n=1.$

In this section we prove some estimates of Fourier transforms of functions in $W_1^1(\R^n)$ $(n\ge 3).$
In particular, these estimates provide Oberlin type inequalities for the Fourier transforms of the derivatives of
functions in $W_1^1(\R^n)$.

We shall use the notation (\ref{def**}).

\begin{teo}\label{Sup} Let $f\in W_1^1(\R^n)$ $(n\ge 3).$ Then
\begin{equation}\label{sup111}
\sum_{j=1}^n\int_0^\infty F_{t,j}^{**}(t^{n-1})\,dt\le c ||\nabla f||_1,
\end{equation}
where
\begin{equation}\label{sup11}
F_{t,j}(\widehat\xi_j)=\sup_{|\xi_j|\ge t}|\widehat f(\xi)|\quad (t>0).
\end{equation}
\end{teo}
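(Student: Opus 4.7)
My plan is to apply Theorem~\ref{refined} with $p=1$, $n\ge 3$, and some $q$ satisfying $1<q<(n-1)/(n-2)$; then $\a=1-(n-1)/q'\in(0,1)$ and
\[
\sum_{j=1}^n\int_0^\infty h^{-\a}\|\D_j(h)f\|_{V_{q,1,j}}\,\frac{dh}{h}\le c\,\|\nabla f\|_1.
\]
It therefore suffices to establish, for each $j$ and each $t>0$, the pointwise estimate
\[
F_{t,j}^{**}(t^{n-1})\le c\,t^{\a}\int_{1/(2t)}^{1/t}\|\D_j(h)f\|_{V_{q,1,j}}\,dh,\qquad(\star)
\]
after which Fubini---for fixed $h>0$ the admissible values of $t$ form $[1/(2h),1/h]$ and $\int_{1/(2h)}^{1/h}t^\a\,dt\le c\,h^{-\a-1}$---dominates $\int_0^\infty F_{t,j}^{**}(t^{n-1})\,dt$ by $c\int_0^\infty h^{-\a}\|\D_j(h)f\|_{V_{q,1,j}}\,dh/h$, and Theorem~\ref{refined} concludes the argument.

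To prove $(\star)$ I start from $\widehat{\D_j(h)f}(\xi)=(e^{2\pi ih\xi_j}-1)\widehat f(\xi)$. A change of variable shows that
\[
\int_{1/(2t)}^{1/t}|e^{2\pi ih\xi_j}-1|\,dh\ge c/t \quad\text{uniformly in }|\xi_j|\ge t,
\]
so, writing $\mathcal F_j$ for the partial Fourier transform in $x_j$, this gives for $|\xi_j|\ge t$
\[
|\mathcal F_j f(v,\xi_j)|\le c\,t\int_{1/(2t)}^{1/t}R_h(v)\,dh,\quad R_h(v):=\sup_{\xi_j\in\R}|\mathcal F_j[\D_j(h)f](v,\xi_j)|.
\]
The key structural observation is that $R_h(v)\le\int_\R|\D_j(h)f(v,x_j)|\,dx_j$ (the partial Fourier transform in one variable is pointwise dominated by the $L^1$-norm in that variable), which matches precisely the inner $L^1_{x_j}$ defining $V_{q,1,j}=L^{q,1}(\R^{n-1})[L^1(\R)]_j$; hence $\|R_h\|_{L^{q,1}(\R^{n-1})}\le\|\D_j(h)f\|_{V_{q,1,j}}$. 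Taking the supremum over $|\xi_j|\ge t$ and applying Minkowski's inequality in $L^{q,1}(\R^{n-1})$, one obtains for $M_t(v):=\sup_{|\xi_j|\ge t}|\mathcal F_j f(v,\xi_j)|$ the key estimate
\[
\|M_t\|_{L^{q,1}(\R^{n-1})}\le c\,t\int_{1/(2t)}^{1/t}\|\D_j(h)f\|_{V_{q,1,j}}\,dh. \qquad(\dagger)
\]

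The final step is to pass from $M_t$ (partial Fourier transform) to the full Fourier transform appearing in $F_{t,j}$. For each $\xi_j$ with $|\xi_j|\ge t$, Hausdorff--Young in Lorentz spaces on $\R^{n-1}$ (applicable since $q\le(n-1)/(n-2)\le 2$ for $n\ge 3$) yields
\[
\|\widehat f(\cdot,\xi_j)\|_{L^{q',1}(\R^{n-1})}\le c\,\|\mathcal F_j f(\cdot,\xi_j)\|_{L^{q,1}(\R^{n-1})}\le c\,\|M_t\|_{L^{q,1}},
\]
and combining this with the standard bound $g^{**}(s)\le c\,s^{-1/q'}\|g\|_{L^{q',1}}$ together with $(n-1)/q'=1-\a$ would give $F_{t,j}^{**}(t^{n-1})\le c\,t^{\a-1}\|M_t\|_{L^{q,1}}$, which in combination with $(\dagger)$ proves $(\star)$. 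The main technical obstacle is precisely this last step, since the supremum over $\xi_j$ does not commute with the $L^{q',1}$-norm in $u$; I expect the argument to require an additional ingredient, most likely realising $F_{t,j}$ as a countable supremum over a dense set of $\xi_j$'s (using continuity of $\widehat f$ in $\xi_j$ and the Riemann--Lebesgue lemma) and then running the rearrangement averaging at the specific scale $s=t^{n-1}$ so as to extract the gain $s^{-1/q'}=t^{\a-1}$ uniformly in the choice of level set. Summing the resulting bound over $j=1,\ldots,n$ completes the proof of \eqref{sup111}.
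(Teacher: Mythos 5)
Your plan tracks the paper's own argument quite closely: you average the identity $\widehat{\Delta_j(h)f}(\xi)=(e^{2\pi ih\xi_j}-1)\widehat f(\xi)$ over $h$ at scale $\sim 1/t$ to recover $|\widehat f(\xi)|$ for $|\xi_j|\ge t$, pass to an $(n-1)$-dimensional Hausdorff--Young estimate in $\widehat\xi_j$, and conclude via Theorem~\ref{refined}. The cosmetic deviations --- averaging over $h\in[1/(2t),1/t]$ rather than $[0,1/t]$, and a Lorentz Hausdorff--Young $L^{q,1}\to L^{q',1}$ rather than the plain $L^q\to L^{q'}$ one --- are harmless and do not change the strategy.

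The proposal, however, stops short at the decisive step and you yourself flag this: passing from the per-$\xi_j$ Hausdorff--Young bound to the rearranged quantity $F_{t,j}^{**}(t^{n-1})$ requires commuting the supremum over $\xi_j$ with the integral over the level set (equivalently, with the $L^{q'}$- or $L^{q',1}$-norm in $\widehat\xi_j$). This is a genuine gap, not a technicality. What one needs is a bound on $\bigl\|\sup_{|\xi_j|\ge t}|\widehat{\Delta_j(h)f}(\cdot,\xi_j)|\bigr\|_{L^{q'}(\R^{n-1})}$, whereas Hausdorff--Young only yields $\|\widehat{\Delta_j(h)f}(\cdot,\xi_j)\|_{L^{q'}}\le\|\Delta_j(h)f\|_{L^q[L^1]}$ for each \emph{fixed} $\xi_j$; the two are not comparable, since $\int_E\sup_{\xi_j}g\,d\widehat\xi_j$ can be strictly larger than $\sup_{\xi_j}\int_E g\,d\widehat\xi_j$ (the $\xi_j$ achieving the supremum may vary with $\widehat\xi_j$). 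It is worth noting that the paper's own proof performs exactly this interchange tacitly: from
$F_{t,1}^{**}(t^{n-1})=t^{1-n}\sup_{|E|=t^{n-1}}\int_E F_{t,1}(\widehat\xi_1)\,d\widehat\xi_1$
and $F_{t,1}(\widehat\xi_1)\le\frac{2}{\tau}\sup_{|\xi_1|\ge t}\int_0^\tau|\widehat{\f_h}(\xi)|\,dh$, it jumps to
$F_{t,1}^{**}(t^{n-1})\le\frac{2t^{1-n}}{\tau}\sup_{E}\sup_{|\xi_1|\ge t}\int_E\int_0^\tau|\widehat{\f_h}(\xi)|\,dh\,d\widehat\xi_1$,
which moves the $\sup_{|\xi_1|\ge t}$ from under $\int_E$ to outside it --- the wrong direction for an upper bound --- and then applies Hausdorff--Young only at fixed $\xi_1$. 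So your scruple is well founded: the obstacle you identify is precisely the point that is passed over in the paper as well, and closing it would require an additional ingredient (some maximal-in-$\xi_j$ version of the Hausdorff--Young step, or a measurable-selection argument) that neither you nor, as written, the paper supplies.
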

\begin{proof} We estimate the first term of the sum in (\ref{sup111}). Set $\f_h(x)=\Delta_1(h)f(x).$ Then
$$
\widehat {\f_h}(\xi)=\widehat f(\xi)(e^{2\pi ih\xi_1}-1).
$$
Let $t>0$ and $\tau=1/t.$ Assume that $|\xi_1|\ge t.$ Then
$$
\frac{1}{\tau}\int_0^\tau |e^{2\pi ih\xi_1}-1|dh\ge \frac{1}{\tau}\int_0^\tau (1-\cos (2\pi \xi_1 h))dh
$$
$$
=1-\frac{\sin (2\pi\xi_1 \tau)}{2\pi\xi_1\tau}\ge 1-\frac{1}{2\pi|\xi_1|\tau}\ge 1-\frac{1}{2\pi}.
$$
It follows that
$$
\frac{2}{\tau}\int_0^\tau|\widehat {\f_h}(\xi)|dh\ge|\widehat f(\xi)|\quad\mbox{if}\quad |\xi_1|\ge t
$$
and
$$
\frac{2}{\tau}\sup_{|\xi_1|\ge t}\int_0^\tau|\widehat {\f_h}(\xi)|dh\ge F_{t,1}(\widehat\xi_1).
$$
By (\ref{supremum}), we have
$$
 F_{t,1}^{**}(t^{n-1})\le \frac{2t^{1-n}}{\tau}\sup_{\mes_{n-1}E=t^{n-1}}\sup_{|\xi_1|\ge t}\int_E\int_0^\tau|\widehat {\f_h}(\xi)|dhd\widehat\xi_1
$$
$$
\le \frac{2t^{1-n}}{\tau}\sup_{|\xi_1|\ge t}\int_0^\tau \sup_{\mes_{n-1}E=t^{n-1}}\int_E|\widehat {\f_h}(\xi)|d\widehat\xi_1dh.
$$
Let $1<q<(n-1)';$ then $q<2$. By H\"older's inequality, for any set $E\subset \R^{n-1}$ with $\mes_{n-1}E=t^{n-1}$ and any fixed $\xi_1\in \R$
$$
t^{1-n}\int_E|\widehat {\f_h}(\xi)|d\widehat\xi_1
\le t^{-(n-1)/q'}\left(\int_{\R^{n-1}}|\widehat {\f_h}(\xi)|^{q'}d\widehat\xi_1\right)^{1/q'}.
$$
Observe that for  fixed $h>0$ and  $\xi_1\in\R$, $\widehat {\f_h}(\xi)=(\widehat {\f_h})_{\xi_1}(\widehat \xi_1)$ is the Fourier transform of the function
$$
\widehat{x}_1\mapsto \int_\R \Delta_1(h)f(x)e^{-2\pi i\xi_1 x_1}\,dx_1.
$$
Applying the Hausdorff-Young inequality, we obtain
$$
\left(\int_{\R^{n-1}}|(\widehat {\f_h})_{\xi_1}(\widehat \xi_1)|^{q'}d\widehat\xi_1\right)^{1/q'}\le \left(\int_{\R^{n-1}}
\left|\int_\R \Delta_1(h)f(x)e^{-2\pi i\xi_1 x_1}\,dx_1\right|^q d\widehat x_1\right)^{1/q}
$$
$$
\le \left(\int_{\R^{n-1}}
\left(\int_\R |\Delta_1(h)f(x)|\,dx_1\right)^qd\widehat x_1\right)^{1/q}.
$$
Thus,
we have
$$
\frac{2t^{1-n}}{\tau}\sup_{\mes_{n-1}E=t^{n-1}}\sup_{|\xi_1|\ge t}\int_E\int_0^\tau|\widehat {\f_h}(\xi)|dhd\widehat\xi_1
$$
$$
\le 2t^{1-(n-1)/q'}\int_0^{1/t}||\Delta_1(h)f||_{L^q[L^1]}dh.
$$
It follows that
$$
\begin{aligned}
\int_0^\infty  F_{t,1}^{**}(t^{n-1})\,dt
&\le 2\int_0^\infty t^{1-(n-1)/q'}\int_0^{1/t}||\Delta_1(h)f||_{L^q[L^1]}dhdt\\
&= c \int_0^\infty h^{(n-1)/q'-1}||\Delta_1(h)f||_{L^q[L^1]}\frac{dh}{h}.
\end{aligned}
$$
Applying Theorem \ref{refined}, we obtain that
$$
\int_0^\infty  F_{t,1}^{**}(t^{n-1})\,dt\le c ||\nabla f||_1.
$$
\end{proof}

Similarly, we have the following theorem.
\begin{teo}\label{SupH} Let $f\in HW_1^1(\R^2).$  Then
\begin{equation*}
\int_0^\infty [F_{t,1}^{**}(t)+F_{t,2}^{**}(t)]dt\le c (||D_1f||_{H^1}+||D_2f||_{H^1}),
\end{equation*}
where
$$
F_{t,1}(\xi)=\sup_{|\eta|\ge t}|\widehat f(\xi,\eta)|,\quad F_{t,2}(\eta)=\sup_{|\xi|\ge t}|\widehat f(\xi,\eta)|.
$$
\end{teo}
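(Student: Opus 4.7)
The plan is to repeat the argument of Theorem~\ref{Sup} verbatim, specialized to $n=2$, but to use Theorem~\ref{Hardy2} in place of Theorem~\ref{refined} --- this is exactly the $p=1,\,n=2$ situation Theorem~\ref{refined} misses and which Theorem~\ref{Hardy2} was devised to cover. Fix any $q\in(1,2]$ and let $\alpha=1-1/q'=1/q$; both the Hausdorff--Young constraint $q\le 2$ and the range $1<q<(n-1)'=\infty$ demanded by Theorem~\ref{Hardy2} with $n=2$ are then met.

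To estimate $\int_0^\infty F_{t,1}^{**}(t)\,dt$, I would set $\varphi_h(x,y)=\Delta_2(h)f(x,y)$, so that $\widehat{\varphi_h}(\xi,\eta)=\widehat f(\xi,\eta)(e^{2\pi i h\eta}-1)$. The elementary inequality used in the proof of Theorem~\ref{Sup}, namely $\tau^{-1}\int_0^\tau|e^{2\pi ih\eta}-1|\,dh\ge 1-1/(2\pi)$ whenever $|\eta|\ge t$ and $\tau=1/t$, yields $F_{t,1}(\xi)\le (2/\tau)\sup_{|\eta|\ge t}\int_0^\tau|\widehat{\varphi_h}(\xi,\eta)|\,dh$. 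Combining this with the rearrangement identity \eqref{supremum} and interchanging the two suprema with the $h$-integral (exactly as in Theorem~\ref{Sup}) gives
\[
F_{t,1}^{**}(t)\le \frac{2}{t\tau}\int_0^\tau\sup_{\mes_1 E=t}\sup_{|\eta|\ge t}\int_E|\widehat{\varphi_h}(\xi,\eta)|\,d\xi\,dh.
\]

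Next I would apply H\"older's inequality in $\xi$ with exponents $q,q'$ followed by the Hausdorff--Young inequality applied only in $\xi$ (with $\eta$ frozen); as in Theorem~\ref{Sup}, this controls $\int_E|\widehat{\varphi_h}(\xi,\eta)|\,d\xi$ by $t^{1/q}\|\Delta_2(h)f\|_{L^q[L^1]_2}$, a bound independent of $\eta$ and of $E$. Feeding this back in and swapping the $t,h$ integrations yields
\[
\int_0^\infty F_{t,1}^{**}(t)\,dt\le c\int_0^\infty h^{-1/q}\|\Delta_2(h)f\|_{L^q[L^1]_2}\,\frac{dh}{h}.
\]
Since $L^{q,1}(\R)\subset L^q(\R)$ for $q>1$, the mixed norm $\|\cdot\|_{L^q[L^1]_2}$ is dominated by $\|\cdot\|_{V_{q,2}}$, and Theorem~\ref{Hardy2} with this $\alpha$ closes out the estimate for the first term. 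The integral $\int_0^\infty F_{t,2}^{**}(t)\,dt$ is treated symmetrically: take $\varphi_h=\Delta_1(h)f$, apply Hausdorff--Young in $\eta$, and invoke Theorem~\ref{Hardy2} for $k=1$.

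I do not expect any genuine obstacle: the scheme transcribes directly from Theorem~\ref{Sup}, with Theorem~\ref{Hardy2} supplying the only new ingredient needed for $n=2$. The main point requiring a moment's care --- and really the only one --- is verifying that the Lorentz mixed norm in Theorem~\ref{Hardy2} dominates the plain Lebesgue mixed norm produced by Hausdorff--Young, which is the routine $L^{q,1}\subset L^q$ embedding, and that the chosen $q$ lies in the intersection of the admissible ranges. All the actual work has been done upstream, in Theorem~\ref{Hardy2}.
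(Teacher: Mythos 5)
Your proposal is correct and matches the paper's intended proof: the paper gives no separate argument for Theorem~\ref{SupH}, the word ``Similarly'' signaling exactly the transcription you carry out --- repeat Theorem~\ref{Sup} at $n=2$ (the elementary bound $\tau^{-1}\int_0^\tau|e^{2\pi ih\eta}-1|\,dh\ge 1-1/(2\pi)$, the identity \eqref{supremum}, H\"older and Hausdorff--Young in the one free variable, Fubini), and then close with Theorem~\ref{Hardy2} in place of Theorem~\ref{refined}. The two bookkeeping points you flag (choosing $q\in(1,2]$ so that Hausdorff--Young and the $n=2$ range of Theorem~\ref{Hardy2} both apply, and $L^{q,1}\subset L^q$ to absorb the $L^q[L^1]$ bound into the $V_{q,k}$ norm) are precisely the ones implicitly used already in the proof of Theorem~\ref{Sup}, so the argument is complete.
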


Applying Theorem \ref{Sup}, we obtain the following Oberlin type estimate.
\begin{teo}\label{Obertype1} Let $f\in W_1^1(\R^n)$ $(n\ge 3).$ Then
\begin{equation}\label{obertype1}
 \sum_{k\in\Z} 2^{k(2-n)}\sup_{2^{k}\le r\le 2^{k+1}}\int_{S_r} |\widehat f(\xi)| d\s(\xi)\le c ||\nabla f||_1,
\end{equation}
where $S_r$ is the sphere of the radius $r$ centered at the origin in $\R^n$ and $d\s(\xi)$ is the canonical surface measure on $S_r.$
\end{teo}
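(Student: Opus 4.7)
The plan is to deduce the estimate from Theorem~\ref{Sup} by combining a geometric covering of the sphere $S_r$ with the monotonicity properties of the auxiliary functions $F_{t,j}^{**}$. Since every $\xi\in S_r$ satisfies $|\xi_j|\ge r/\sqrt{n}$ for at least one index $j$, I would set $A_{j,r}^{\pm}=\{\xi\in S_r\colon \pm\xi_j\ge r/\sqrt{n}\}$ and note $S_r=\bigcup_{j=1}^n(A_{j,r}^+\cup A_{j,r}^-)$, so the spherical integral splits into $2n$ caps that can be handled separately.

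The key reduction is to estimate each cap by a Euclidean integral over a ball in $\R^{n-1}$. I would parameterize $A_{j,r}^{\pm}$ by its projection onto the hyperplane $\xi_j=0$: the Jacobian $r/|\xi_j|$ is bounded by $\sqrt{n}$ on the cap, and by the very definition of $F_{t,j}$ one has $|\widehat f(\xi)|\le F_{r/\sqrt n,j}(\widehat\xi_j)$ whenever $|\xi_j|\ge r/\sqrt n$. This yields
$$\int_{A_{j,r}^{\pm}}|\widehat f(\xi)|\,d\sigma(\xi)\le \sqrt{n}\int_{\{|\widehat\xi_j|\le r\}}F_{r/\sqrt n,j}(\widehat\xi_j)\,d\widehat\xi_j\le c_n\,r^{n-1}F_{r/\sqrt n,j}^{**}(c_n r^{n-1}),$$
the last step using (\ref{supremum}) together with the fact that the $(n-1)$-dimensional ball of radius $r$ has measure at most $c_n r^{n-1}$. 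Summing over $j$ and the two signs gives
$$\int_{S_r}|\widehat f(\xi)|\,d\sigma(\xi)\le c\,r^{n-1}\sum_{j=1}^n F_{r/\sqrt n,j}^{**}(c_n r^{n-1}).$$

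For $r\in[2^k,2^{k+1}]$, the monotonicity of $F_{t,j}^{**}(s)$ in each argument (decreasing in $t$ because $F_{t,j}$ is pointwise decreasing in $t$, and decreasing in $s$ by the standard property of $(\cdot)^{**}$) yields
$$2^{k(2-n)}\sup_{2^k\le r\le 2^{k+1}}\int_{S_r}|\widehat f(\xi)|\,d\sigma(\xi)\le c\sum_{j=1}^n 2^k F_{2^k/\sqrt n,j}^{**}(c_n 2^{k(n-1)}).$$
I then fix an integer $m_n$ large enough that $2^{-m_n}\le 1/\sqrt n$ and $2^{-m_n(n-1)}\le c_n$; a further use of monotonicity gives $F_{2^k/\sqrt n,j}^{**}(c_n 2^{k(n-1)})\le F_{2^{k-m_n},j}^{**}(2^{(k-m_n)(n-1)})$, so after shifting the summation index,
$$\sum_{k\in\Z}2^k F_{2^k/\sqrt n,j}^{**}(c_n 2^{k(n-1)})\le 2^{m_n}\sum_{k\in\Z}2^k F_{2^k,j}^{**}(2^{k(n-1)}).$$

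It only remains to compare this dyadic sum with the integral controlled by Theorem~\ref{Sup}. Since $F_{t,j}^{**}(t^{n-1})$ is decreasing in $t$, each dyadic interval yields
$$\int_{2^k}^{2^{k+1}}F_{t,j}^{**}(t^{n-1})\,dt\ge 2^k F_{2^{k+1},j}^{**}(2^{(k+1)(n-1)}),$$
hence $\sum_{k\in\Z}2^k F_{2^k,j}^{**}(2^{k(n-1)})\le 2\int_0^\infty F_{t,j}^{**}(t^{n-1})\,dt$, and Theorem~\ref{Sup} closes the argument. The main technical nuisance is bookkeeping the geometric constants $c_n,m_n$ produced by the spherical parameterization so that the monotonicity-plus-shift reduction fits exactly the hypothesis of Theorem~\ref{Sup}; the analytic weight of the proof is already contained in that theorem.
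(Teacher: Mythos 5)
Your argument is correct and follows essentially the paper's own proof: the same decomposition of $S_r$ into the $2n$ caps on which $|\xi_j|\ge r/\sqrt n$, the same projection onto the coordinate hyperplane with Jacobian bound $\sqrt n$, the same passage to $F_{t,j}^{**}$, and the same dyadic-to-integral comparison reducing everything to Theorem~\ref{Sup}. The only cosmetic differences are that you enlarge the integration domain to $\{|\widehat\xi_j|\le r\}$ instead of using the exact projection $B_r'$, and you track the resulting geometric constants by an explicit index shift, whereas the paper absorbs them directly via monotonicity of $F^{**}$.
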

\begin{proof} Let $B'_r$ be the ball  in $\R^{n-1}$ of the radius $r/\sqrt{n'}$ centered at the origin. Set
$$
S_{r,j}^+=\{\xi\in S_r:\xi_j\ge\frac{r}{\sqrt{n}}\}\quad\mbox{and}\quad S_{r,j}^-=\{\xi\in S_r:\xi_j\le -\frac{r}{\sqrt{n}}\}.
$$
Clearly,
\begin{equation}\label{obertype2}
S_{r,j}^+\cup S_{r,j}^-=\{\xi\in S_r: \widehat \xi_j\in B_r'\}\quad\mbox{and}\quad S_r=\bigcup_{j=1}^n(S_{r,j}^+\cup S_{r,j}^-).
\end{equation}
The surface $S^+_{r,j}$ is given by the equation
$$
\xi_j=\sqrt{r^2-|\widehat \xi_j|^2}, \quad \widehat \xi_j\in B'_r.
$$
Using notation (\ref{sup11}), we have
$$
\begin{aligned}
\int_{S_{r,j}^+}|\widehat f(\xi)| d\s(\xi)&=\int_{B_r'}\left|\widehat f\left(\sqrt{r^2-|\widehat \xi_j|^2}, \widehat \xi_j\right)\right|\frac{r}{\sqrt{r^2-|\widehat \xi_j|^2}}d\widehat\xi_j\\
&\le \sqrt{n}\int_{B_r'} F_{r/\sqrt{n},j}(\widehat\xi_j)d\widehat\xi_j.
\end{aligned}
$$
Further, $\mes_{n-1}B'_r=c_n r^{n-1}.$ If $2^k\le r\le 2^{k+1},$ then $\mes_{n-1}B'_r\asymp 2^{k(n-1)}.$ It easily follows that
$$
\begin{aligned}
&2^{k(1-n)}\sup_{2^{k}\le r\le 2^{k+1}}\int_{S^+_{r,j}} |\widehat f(\xi)| d\s(\xi)\\
&\le c 2^{k(1-n)} \int_0^{2^{k(n-1)}}F^*_{t_k,j}(u)du\le c'F^{**}_{t_k,j}(t_k^{n-1}),
\end{aligned}
$$
where $t_k=2^k/\sqrt{n}.$
Similar estimates hold for integrals over $S^-_{r,j}.$
Taking into account (\ref{obertype2}), we obtain
$$
\begin{aligned}
&\sum_{k\in\Z} 2^{k(2-n)}\sup_{2^{k}\le r\le 2^{k+1}}\int_{S_r} |\widehat f(\xi)| d\s(\xi)\\
&\le c\sum_{j=1}^n\sum_{k\in\Z} 2^k F^{**}_{t_k,j}(t_k^{n-1})\le c'\sum_{j=1}^n\int_0^\infty F_{t,j}^{**}(t^{n-1})dt.
\end{aligned}
$$
By  Theorem \ref{Sup}, this implies (\ref{obertype1}).

\end{proof}

We observe that (\ref{obertype1}) is equivalent to the inequality
$$
 \sum_{j=1}^n\sum_{k\in\Z} 2^{k(1-n)}\sup_{2^{k}\le r\le 2^{k+1}}\int_{S_r} |(D_jf)^\land(\xi)| d\s(\xi)\le c \sum_{j=1}^n||D_j f||_1
$$
which is a direct analogue of the Oberlin inequality (\ref{oberlin}).

Clearly, Theorem \ref{Sup} can be used to derive other Oberlin type estimates. For example, one can replace spheres by the surfaces of cubes.
For $k\in\Z$ and $1\le j\le n,$ denote
$$
Q_k^{(j)}=\{\widehat\xi_j: |\xi_m|\le 2^k, \,\, 1\le m\le n,\,\, m\not=j\}.
$$
Applying Theorem \ref{Sup}, we obtain the following
\begin{cor}\label{obertype3} Let $f\in W_1^1(\R^n)$ $(n\ge 3).$ Then
\begin{equation}\label{obertype33}
\sum_{j=1}^n \sum_{k\in\Z} 2^{k(2-n)}\sup_{2^{k}\le|\xi_j|\le 2^{k+1}}\int_{Q_k^{(j)}} |\widehat f(\xi)| d\widehat \xi_j\le c ||\nabla f||_1.
\end{equation}
\end{cor}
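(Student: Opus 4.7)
The plan is to mimic the proof of Theorem \ref{Obertype1}, but with the geometry of cubes replacing that of spheres (which actually simplifies matters, since no parametrization of a curved surface is needed). The key input is again Theorem \ref{Sup}; the rest is a reduction via the pointwise bound $|\widehat f(\xi)|\le F_{2^k,j}(\widehat\xi_j)$ whenever $|\xi_j|\ge 2^k$.

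First, I would fix $j$ and $k$. Observe that $Q_k^{(j)}$ is the cube in $\R^{n-1}$ of side length $2^{k+1}$ centered at the origin, so $\mes_{n-1}Q_k^{(j)}\asymp 2^{k(n-1)}$. By the definition \eqref{sup11} of $F_{t,j}$, for every $\xi$ with $|\xi_j|\ge 2^k$ one has $|\widehat f(\xi)|\le F_{2^k,j}(\widehat\xi_j)$, so
\[
\sup_{2^k\le|\xi_j|\le 2^{k+1}}\int_{Q_k^{(j)}}|\widehat f(\xi)|\,d\widehat\xi_j\le \int_{Q_k^{(j)}}F_{2^k,j}(\widehat\xi_j)\,d\widehat\xi_j\le \int_0^{\mes Q_k^{(j)}}F_{2^k,j}^*(u)\,du,
\]
and by the definition \eqref{def**} the last quantity equals $\mes_{n-1}Q_k^{(j)}\cdot F_{2^k,j}^{**}(\mes_{n-1}Q_k^{(j)})\le c\,2^{k(n-1)}F_{2^k,j}^{**}(2^{k(n-1)})$ (where I also use that $F_{2^k,j}^{**}$ is nonincreasing to replace the cube's precise measure by the comparable quantity $2^{k(n-1)}$).

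Next, multiplying by the weight $2^{k(2-n)}$ collapses the exponents to
\[
2^{k(2-n)}\sup_{2^k\le|\xi_j|\le 2^{k+1}}\int_{Q_k^{(j)}}|\widehat f(\xi)|\,d\widehat\xi_j\le c\,2^{k}F_{2^k,j}^{**}(2^{k(n-1)}),
\]
so the remaining task is to dominate $\sum_{k\in\Z}2^kF_{2^k,j}^{**}(2^{k(n-1)})$ by $\int_0^\infty F_{t,j}^{**}(t^{n-1})\,dt$. For this I would use that the function $t\mapsto F_{t,j}^{**}(t^{n-1})$ is nonincreasing: as $t$ grows, $F_{t,j}$ decreases pointwise (supremum over a smaller set), hence so does $F_{t,j}^{**}$ at every argument; and $F_{t,j}^{**}(\cdot)$ is itself nonincreasing so evaluation at the increasing argument $t^{n-1}$ only decreases the value further. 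Consequently, for $t\in[2^{k-1},2^k]$,
\[
F_{t,j}^{**}(t^{n-1})\ge F_{2^k,j}^{**}(2^{k(n-1)}),
\]
which upon integration gives $\int_{2^{k-1}}^{2^k}F_{t,j}^{**}(t^{n-1})\,dt\ge 2^{k-1}F_{2^k,j}^{**}(2^{k(n-1)})$. Summing over $k\in\Z$ yields $\sum_{k}2^k F_{2^k,j}^{**}(2^{k(n-1)})\le 2\int_0^\infty F_{t,j}^{**}(t^{n-1})\,dt$.

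Finally I would sum over $j=1,\dots,n$ and apply Theorem \ref{Sup} to obtain \eqref{obertype33}. The only part that deserves some care is the monotonicity verification in the dyadic-to-integral step, but it follows directly from the two pieces of monotonicity already noted; no real obstacle arises, since Theorem \ref{Sup} has already absorbed all the analytic content (the application of Theorem \ref{refined} to the mixed-norm Besov estimate on $\Delta_1(h)f$).
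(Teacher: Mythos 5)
Your proof is correct and follows the same route the paper intends: the paper leaves Corollary \ref{obertype3} without an explicit argument, simply saying it follows from Theorem \ref{Sup}, and the reader is meant to adapt the proof of Theorem \ref{Obertype1} with the cube geometry replacing the sphere geometry. Your filled-in details (the pointwise bound $|\widehat f(\xi)|\le F_{2^k,j}(\widehat\xi_j)$, the passage to $F^{**}$ via (\ref{supremum}) and (\ref{def**}), and the monotonicity of $t\mapsto F_{t,j}^{**}(t^{n-1})$ for the dyadic-to-integral comparison) are precisely the steps tacitly used in the displayed chain at the end of the proof of Theorem \ref{Obertype1}, so this matches the paper's approach.
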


Let $Q_k=[-2^k, 2^k]^n$ and $P_k=Q_k\setminus Q_{k-1}\,\,(k\in\Z)$. We have
$$
\sum_{j=1}^n\sup_{2^{k-1}\le|\xi_j|\le 2^{k}}\int_{Q_k^{(j)}} |\widehat f(\xi)| d\widehat \xi_j\ge 2^{1-k}\int_{P_k} |\widehat f(\xi)| d\xi.
$$
Thus, (\ref{obertype33}) gives the strengthening of the inequality (\ref{pelcz}) (for $n\ge 3).$

\vskip 8pt

\noindent
{\bf Acknowledgments:} The author is grateful to  the referee for the careful revision which has greatly improved the final version of the work.

\end{document}